\renewcommand{\subset}{\subseteq}
\newcommand{\lcb}{\left\lbrace} 
\newcommand{\rcb}{\right\rbrace} 
\newcommand{\cb}[1]{\lcb #1 \rcb} 
\newcommand{\cbOf}[1]{\mathopen{}\lcb #1 \rcb\mathclose{}} 
\newcommand{\lb}{\left(} 
\newcommand{\rb}{\right)} 
\newcommand{\br}[1]{\lb #1 \rb} 
\newcommand{\brOf}[1]{\!\br{#1}} 
\newcommand{\abs}[1]{\left| #1 \right|} 
\newcommand*{\E}{\mathbb{E}} 
\let\Pr\relax
\newcommand*{\Pr}{\mathbb{P}} 
\newcommand{\sizedMid}[2]{#1 \, \kern-\nulldelimiterspace\mathopen{}\left| \vphantom{#1}\,#2\right.\mathclose{}\kern-\nulldelimiterspace}
\newcommand{\Eof}[1]{\E[#1]}
\newcommand{\PrOf}[1]{\Pr\mathopen{}\lb #1 \rb\mathclose{}}
\newcommand{\Prof}[1]{\Pr(#1)}
\DeclareMathOperator{\diam}{\mathsf{diam}}
\DeclareMathOperator{\ball}{\mathrm{B}}
\providecommand\given{} 
\newcommand\SetSymbol[1][]{
	\nonscript\,#1\vert \allowbreak \nonscript\,\mathopen{}}
\DeclarePairedDelimiterX\Set[1]{\lbrace}{\rbrace}%
{ \renewcommand\given{\SetSymbol[\delimsize]} #1 }
\newcommand{\Ex}{\E\expectarg}
\DeclarePairedDelimiterX{\expectarg}[1]{[}{]}{%
	\ifnum\currentgrouptype=16 \else\begingroup\fi
	\activatebar#1
	\ifnum\currentgrouptype=16 \else\endgroup\fi
}
\newcommand{\innermid}{\nonscript\;\delimsize\vert\nonscript\;}
\newcommand{\activatebar}{%
	\begingroup\lccode`\~=`\|
	\lowercase{\endgroup\let~}\innermid 
	\mathcode`|=\string"8000
}
\newcommand*{\mc}[1]{\mathcal{#1}}
\newcommand*{\ms}[1]{\mathsf{#1}}
\newcommand*{\mf}[1]{\mathfrak{#1}}
\newcommand{\N}{\mathbb{N}}
\newcommand{\R}{\mathbb{R}}
\newcommand{\transpose}{\!^\top\!}
\newcommand{\tr}{\transpose}
\newcommand{\pr}{^\prime}
\def\integral from #1to #2of #3by #4;{\int_{#1}^{#2} \! #3 \mathrm{d}#4} %
\def\integralMeasure in #1of #2by #3of #4;{\int_{#1} \! #2{#4} #3{\mathrm{d}#4}} %
\def\mapping #1from #2to #3;{#1 \colon #2 \rightarrow #3}
\def\mappingDef #1from #2to #3maps #4to #5;{#1 \colon #2 \rightarrow #3,\ #4 \mapsto #5}
\def\seq #1by #2;{\br{#1}_{#2\in\N}}
\def\seqInText #1by #2;{(#1)_{#2\in\N}}
\newcommand{\lebesgue}{\mathcal{L}}
\newcommand{\lebesguePow}[1]{\lebesgue^{#1}}
\newcommand{\dl}{\mathrm{d}}
\def\converges for #1to #2;{\xrightarrow{#1} #2}
\def\convergesAlmostSurely for #1to #2;{\xrightarrow{#1}_{\mathsf{fs}} #2}
\def\convergesInProbability for #1to #2;{\xrightarrow{#1}_{\mathsf{p}} #2}
\def\convergesInL #1for #2to #3;{\xrightarrow{#2}_{\lebesguePow{#1}} #3}
\newcommand{\ind}{\mathds{1}}
\newcommand{\indOf}[1]{\ind_{\!#1}}%
\newcommand{\indOfEvent}[1]{\indOf{\cbOf{#1}}}%
\newcommand{\normof}[1]{\Vert #1 \Vert}
\newcommand{\equationFullstop}{\, .}
\newcommand{\eqfs}{\equationFullstop}
\newcommand{\equationComma}{\, ,}
\newcommand{\eqcm}{\equationComma}
\DeclareMathOperator*{\argmin}{arg\,min}
\DeclareMathOperator*{\outerlim}{lim\, \overline{sup}}
\DeclareMathOperator*{\innerlim}{innerlim}
\def\MS{\mathcal{Q}}
\def\epi{\mathsf{epi}}%
\def\Hausdorff{\mathsf{H}}%
\def\Haus{\mathsf{H}}%
\def\ubs{\mathsf{ubs}}%
\newcommand{\epiconv}[1]{\xrightarrow{#1\to\infty}_\epi}%
\newcommand{\ubsconv}[1]{\xrightarrow{#1\to\infty}_\ubs}%
\newcommand{\ol}[2]{\overline{#1,\!#2}}
\newcommand{\sol}[2]{\overline{#1,#2}}
\theoremstyle{plain}
\newtheorem{theorem}{Theorem}[section]
\newtheorem{lemma}[theorem]{Lemma}
\newtheorem{corollary}[theorem]{Corollary}
\theoremstyle{definition}
\newtheorem{definition}[theorem]{Definition}
\newtheorem{example}[theorem]{Example}
\theoremstyle{remark}
\newtheorem{remark}[theorem]{Remark}
\newtheorem{assumptions}[theorem]{Assumptions}
\begin{document}
\title{Strong Laws of Large Numbers for Generalizations of Fréchet Mean Sets}
\subtitle{\url{https://github.com/chroetz/PaperStrong22}}
\author{Christof Schötz\\\href{mailto:math@christof-schoetz.de}{math@christof-schoetz.de}}
\date{}
\maketitle
\begin{abstract} 
A Fréchet mean of a random variable $Y$ with values in a metric space $(\mathcal Q, d)$ is an element of the metric space that minimizes $q \mapsto \mathbb E[d(Y,q)^2]$. This minimizer may be non-unique. 
We study strong laws of large numbers for sets of generalized Fréchet means.
Following generalizations are considered: 
the minimizers of $\mathbb E[d(Y, q)^\alpha]$ for $\alpha > 0$,
the minimizers of $\mathbb E[H(d(Y, q))]$ for integrals $H$ of non-decreasing functions, and
the minimizers of $\mathbb E[\mathfrak c(Y, q)]$ for a quite unrestricted class of cost functions $\mf c$.
We show convergence of empirical versions of these sets in outer limit and in one-sided Hausdorff distance. The derived results require only minimal assumptions.

\end{abstract}
\section{Fréchet Mean Sets}
For a random variable $Y$ with values in $\R^s$ and $\Eof{\normof{Y}^2}<\infty$, it holds
\begin{equation*}
	\Ex{Y} = \argmin_{q\in\R^s} \Ex{d(Y,q)^2}
	\eqcm
\end{equation*}
where $d(x,y) = \normof{x-y}$ is the Euclidean distance. We can also write 
\begin{equation*}
	\Ex{Y} = \argmin_{q\in\R^s} \Ex{d(Y,q)^2-d(Y,0)^2}
	\eqcm
\end{equation*}
as we just add a constant term. In the latter equation, we only require $Y$ to be once integrable, $\Eof{\normof{Y}}<\infty$, instead of twice as $|\normof{y-q}^2-\normof{y}^2| \leq 2\normof{y}\normof{q}+\normof{q}^2$.

The concept of Fréchet mean, proposed in \cite{frechet48}, builds upon this minimizing property of the Euclidean mean to generalize the expected value to random variables with values in a metric space. Let $(\mc Q, d)$ be a metric space. As a shorthand we may write $\ol qp$ instead of $d(q,p)$. Let $Y$ be a random variable with values in $\mc Q$. Fix an arbitrary element $o\in \mc Q$. The \emph{Fréchet mean set} of $Y$ is $M = \argmin_{q\in\mc Q} \Ex{\ol Yq^2- \ol Yo^2}$ assuming the expectations exist. 
This definition does not depend on $o$. The reason for subtracting $\ol Yo^2$ is the same as in the Euclidean case: We need to make less moment assumptions to obtain a meaningful value: The triangle inequality implies
\begin{equation*}
\abs{\ol yq^2 - \ol yo^2} \leq \ol oq \br{\ol oq + 2 \ol yo}
\end{equation*}
for all $y,q,o\in\mc Q$. Thus, if $\Ex{\ol Yo} < \infty$, then $\Ex{\ol Yq^2 - \ol Yo^2} < \infty$ for all $q\in\mc Q$.

In Euclidean spaces and other Hadamard spaces (metric spaces with nonpositive curvature), the Fréchet mean is always unique \cite[Proposition 4.3]{sturm03}. This is not true in general. On the circle, a uniform distribution on two antipodal points has two Fréchet means. For a deeper analysis of Fréchet means on the circle, see \cite{hotz15}. Similarly, Fréchet means on many positively curved spaces like (hyper-)spheres may not be unique. For the metric space $\mc Q = \R$ with $d(q,p) =\sqrt{\abs{q-p}}$, the Fréchet mean set is the set of medians, which may also be non-unique. These examples underline the importance of considering sets of minimizers in a general theory instead of assuming uniqueness.

The notion of \textit{Fréchet mean} can be generalized to cases where the cost function to be minimized is not a squared metric, e.g.\ \cite{huckemann11}. We will not explicitly write down measurablity conditions, but silently demand that all spaces have the necessary measurable structure and all functions are measurable when necessary. Let $(\mc Q, d)$ be a metric space and $\mc Y$ be a set. Let $\mf c \colon \mc Y \times \mc Q \to \R$ be a function. Let $Y$ be a random variable with values in $\mc Y$. Let $M := \argmin_{q\in\mc Q} \Ex{\mf c(Y, q)}$ assuming the expectations exist. 
In this context, $\mf c$ is called \textit{cost function}, $\mc Y$ is called \textit{data space}, $\mc Q$ is called \textit{descriptor space}, $q\mapsto\Ex{\mf c(Y, q)}$ is called \textit{objective function} (or \textit{Fréchet function}), and $M$ is called \textit{generalized Fréchet mean set} or \textit{$\mf c$-Fréchet mean set}. 

This general scenario contains the setting of general M-estimation. It includes many important statistical frameworks like maximum likelihood estimation, where $\mc Q = \Theta$ parameterizes a family of densities $(f_\vartheta)_{\vartheta\in\Theta}$ on $\mc Y = \R^p$ and $\mf c(x,\vartheta) = -\log f_{\vartheta}(x)$, or linear regression, where $\mc Q = \R^{s+1}$, $\mc Y = (\{1\}\times\R^s) \times \R$, $\mf c((x,y),\beta) = (y - \beta\tr x)^2$. It also includes nonstandard settings, e.g.\ \cite{huckemann11}, where geodesics in $\mc Q$ are fitted to points in $\mc Y$.

Fix an arbitrary element $o\in\mc Q$. We will use cost functions $\mf c(y, q) = H(\ol yq)-H(\ol yo)$, where $H(x) = \int_0^x h(t) \dl t$ for a non-decreasing function $h$, and $\mf c(y, q) = \ol yq^\alpha-\ol yo^\alpha$ with $\alpha>0$. In both cases the set of minimizers does not depend on $o$. We call the minimizers of the former cost function \textit{$H$-Fréchet means}. In the latter case, we call the minimizers \textit{power Fréchet means} or \textit{$\alpha$-Fréchet means}. We can interpret the different exponents $\alpha = 2$, $\alpha =1$, $\alpha \to 0$, $\alpha \to \infty$ as mean, median, mode, and circumcenter (or mid-range), respectively, see \cite{macqueen67}. The minimizers for $\alpha=1$ are sometimes called \textit{Fréchet median}, e.g.\ \cite{arnaudon13}. If $\mc Q$ is a Banach space, then they are called \textit{geometric} or \textit{spatial median}, e.g.\ \cite{kemperman87}.
$H$-Fréchet means serve as a generalization of $\alpha$-Fréchet means for $\alpha>1$ as well as an intermediate result for proving strong laws of large numbers for $\alpha$-Fréchet mean sets with $\alpha\in(0,1]$.

For a function $f\colon\mc Q\to\R$ and $\epsilon\geq0$, define 
\begin{equation*}
	\epsilon\text{-}\argmin_{q\in\mc Q} f(q) := \Set{q \in \mc Q \given  f(q) \leq \epsilon +\inf_{q\in\mc Q} f(q) }
	\eqfs
\end{equation*}
Let $Y_1, \dots, Y_n$ be independent random variables with the same distribution as $Y$. Choose $(\epsilon_n)_{n\in\N}\subset [0,\infty)$ with $\epsilon_n \xrightarrow{n\to\infty}0$. Let $M_n := \epsilon_n\text{-}\argmin_{q\in\mc Q} \frac1n \sum_{i=1}^n \mf c(Y_i, q)$. Our goal is to show almost sure convergence of elements in $M_n$ to elements in $M$. 
\begin{remark}
	Considering sets of elements that minimize the objective only up to $\epsilon_n$ makes the results more relevant to applications in which Fréchet mean sets are approximated numerically.		
	Furthermore, it may allow us to find more elements of $M$ in the limit of $M_n$ than for $\epsilon_n=0$, as discussed in \autoref{rem:epsilon_argmin} below and appendix \ref{sec:median}.
\end{remark}
There are different possibilities of how a convergence of sets $M_n$ to a set $M$ can be described.
\begin{definition}
	Let $(\mc Q, d)$ be a metric space.
	\begin{enumerate}[label=(\roman*)]
		\item 
		Let $(B_n)_{n\in\N}$ with $B_n \subset \mc Q$ for all $n\in\N$. Then the \emph{outer limit} of $(B_n)_{n\in\N}$ is
		\begin{equation*}
			\outerlim_{n\to\infty} B_n := \bigcap_{n\in\N} \overline{\bigcup_{k\geq n} B_k}\eqcm
		\end{equation*}
		where $\overline B$ denotes the closure of the set $B$.
		\item 
		The \emph{one-sided Hausdorff distance} between $B, B^\prime \subset \MS$ is
		\begin{equation*}
			d_\subset(B,B^\prime) := \sup_{x\in B} \inf_{x^\prime\in B^\prime} d(x,x^\prime)\eqfs
		\end{equation*}
		\item 
		The \emph{Hausdorff distance} between $B, B^\prime \subset \MS$ is
		\begin{equation*}
			d_\Hausdorff(B, B^\prime) 
			:= 
			\max(d_\subset(B,B^\prime), d_\subset(B^\prime,B))
			\eqfs
		\end{equation*}
	\end{enumerate}
\end{definition}
\begin{remark}
\mbox{ }
\begin{enumerate}[label=(\roman*)]
	\item 
		The outer limit is the set of all points of accumulation of all sequences $(x_n)_{n\in\N}$ with $x_n\in B_n$. We may write $\outerlim_{n\to\infty} B_n = \Set{q\in\mc Q \given \liminf_{n\to\infty} d(B_n, q) = 0}$, where $d(B, q) := \inf_{p\in B} d(p, q)$ for a subset $B\subset\mc Q$. The \textit{inner limit} is dual to the outer limit. It is defined as $\innerlim_{n\to\infty} B_n := \Set{q\in\mc Q \given \limsup_{n\to\infty} d(B_n, q) = 0}$. Clearly, $\innerlim_{n\to\infty} B_n \subset \outerlim_{n\to\infty} B_n$. Thus, results of the form $\outerlim_{n\to\infty} B_n \subset B$, which we show below, are stronger than $\innerlim_{n\to\infty} B_n \subset B$.
	\item 
		It holds $d_\subset(B,B^\prime)=0$ if and only if $B \subset \overline{B^\prime}$, but $d_\Hausdorff(B,B^\prime)=0$ if and only if $\overline B = \overline B^ \prime$. The function $d_\Hausdorff$ is a metric on the set of closed and bounded subsets of $\MS$.
	\item 
		Elements from a sequence of sets might have sub-sequences that have no point of accumulation and are bounded away from the outer limit of the sequence of sets. That cannot happen with the one-sided Hausdorff limit. Here, every sub-sequence is eventually arbitrarily close to the limiting set. As an example, the outer limit of the sequence of sets $\{0,n\}$, $n\in\N$ on the Euclidean real line is $\{0\}$, but $d_\subset(\{0,n\},\{0\})\xrightarrow{n\to\infty}\infty$. Aside from an element with diverging distance ($n$ in the example), another cause for the two limits to not align may be non-compactness of bounded sets: Consider the space $\ell^2$ of all sequences $(x_k)_{k\in\N}\subset\R$ with $\sum_{k=1}^\infty x_k^2 < \infty$ with distance $d((x_k)_{k\in\N}, (y_k)_{k\in\N}) = (\sum_{k=1}^\infty (x_k-y_k)^2)^\frac12$. Let $\underline{0}\in\ell^2$ be the sequence with all entries equal to 0. Let $e^n := (e^n_k)_{k\in\N}\in\ell^2$ with $e_n^n=1$ and $e^n_k=0$ for all $k\neq n$. Then $d_\subset(\{\underline 0, e_n\}, \{\underline 0\}) = 1$ for all $n\in\N$, but $\outerlim_{n\to\infty} \{\underline 0, e_n\} = \{\underline 0\}$.
\end{enumerate}
\end{remark}
We will state conditions so that $\outerlim_{n\to\infty} M_n \subset M$ almost surely or $d_\subset(M_n, M) \xrightarrow{n\to\infty}_{\ms{a.s.}} 0$, where the index $\ms{a.s.}$ indicates almost sure convergence. It is not easily possible to show $d_\Hausdorff(M_n, M) \xrightarrow{n\to\infty}_{\ms{a.s.}} 0$ if $M$ is not a singleton, as discussed in \autoref{rem:epsilon_argmin} below and appendix \ref{sec:median}. These limit theorems may be called strong laws of large numbers of the Fréchet mean set or (strong) consistency of the empirical Fréchet mean set. Notably, in \cite{evans20} the connection to convergence in the sense of topology is made:
If the set of closed subsets of $\MS$ is equipped with the \textit{Kuratowski upper topology}, a sequence of closed subsets $(B_n)_{n\in\N}$ converges to a closed subset $B$ if and only if $\outerlim_{n\to\infty} B_n \subset B$. 
If the set of nonempty compact subsets of $\MS$ is equipped with the \textit{Hausdorff upper topology}, a sequence of nonempty compact subsets $(B_n)_{n\in\N}$ converges to a nonempty compact subset $B$ if and only if $d_{\subset}(B_n, B)\xrightarrow{n\to\infty} 0$. 

\cite{ziezold77} shows a strong law in outer limit for Fréchet mean sets with a second moment condition.
\cite{sverdrup81} shows a strong law in outer limit for power Fréchet mean sets in compact spaces.
\cite{bhattacharya03} shows almost sure convergence of Fréchet mean sets in one-sided Hausdorff distance with a second moment condition.
The independent parallel work \cite{evans20} shows strong laws in outer limit and one-sided Hausdorff distance for $\alpha$-Fréchet mean sets requiring $\Ex{\ol Yo^{\alpha}} < \infty$, which is a second moment condition for the Fréchet mean.
In contrast, we show strong laws of large numbers for power Fréchet mean sets in outer limit and in one-sided Hausdorff distance with less moment assumptions: For power $\alpha>1$, we require $\Ex{\ol Yo^{\alpha-1}} < \infty$, and for $\alpha\in (0,1]$ no moment assumption is made, see \autoref{cor:cons_da} and \autoref{cor:median}. Thus, $\alpha$-Fréchet means may be of interest in robust statistics.
\cite{huckemann11} shows almost sure convergence in one-side Hausdorff distance as well as in outer limit for generalized Fréchet means. Our results for $\mf c$-Fréchet means require slightly less strict assumptions, see \autoref{thm:epi} and \autoref{thm:consistency}, which make them applicable in a larger class of settings and allows us to derive our results for $H$- and $\alpha$-Fréchet means with minimal moment assumptions.
Results in \cite{artstein95, korf01, choirat03} imply strong laws and ergodic theorems in outer limit for generalized Fréchet means. We recite parts of these results to state \autoref{thm:epi}.
Furthermore, we show strong laws of large numbers for $H$-Fréchet means sets in outer limit, \autoref{cor:nondec:epi}, and one-sided Hausdorff distance, \autoref{cor:nondec:onehaus}.
When $M$ is singleton a quantitative version (rates of convergence) of the results presented in this article is given in \cite{schoetz19}.

Before we consider the probabilistic setting, we present theory on convergence of minimizing sets for deterministic functions in section \ref{sec:det}, where we partially follow \cite{rockafellar98}. Thereafter, we derive strong laws of large numbers for $\mf c$-Fréchet mean sets in section \ref{sec:gen}, for $H$-Fréchet mean sets in section \ref{sec:nondec}, and for $\alpha$-Fréchet mean sets in section \ref{sec:power}. 
Appendix \ref{sec:median} uses the median as a simple example to illustrate some peculiarities when dealing with sets of Fréchet means.
All strong laws in the main part of this article build upon \cite[Theorem 1.1]{korf01} -- a deep convergence result for functions $\MS \to \R$. In appendix \ref{sec:alt}, we show a different route to a strong law in one-side Hausdorff distance. This is illustrative, but requires slightly stricter assumptions. 
In appendix \ref{sec:aux} some auxiliary results are stated and proven.
\section{Convergence of Minimizer Sets of Deterministic Functions}\label{sec:det}
Let $(\mc Q, d)$ be a metric space. The diameter of a set $B\subset \mc Q$ is defined as $\diam(B) = \sup_{q,p\in B} d(q,p)$. The following notion of convergence of functions will be useful to infer a convergence results of their minimizers.
\begin{definition}
	Let $f,f_n \colon \MS\to\R$, $n\in\N$.
	The sequence $(f_n)_{n\in\N}$ \emph{epi-converges} to $f$ \emph{at} $x\in\MS$ if and only if
	\begin{align*}
	\forall (x_n)_{n\in\N} \subset \MS, x_n \to x \colon& \liminf_{n\to\infty} f_n(x_n) \geq f(x)\qquad\text{and}\\
	\exists (y_n)_{n\in\N} \subset \MS, y_n \to x \colon& \limsup_{n\to\infty} f_n(y_n) \leq f(x)
	\eqfs
	\end{align*}
	The sequence $(f_n)_{n\in\N}$ \emph{epi-converges} to $f$ if and only if it epi-converges at all $x\in\MS$. We then write $f_n\epiconv{n}f$.
\end{definition}
We introduce some short notation. Let $f \colon \MS \to \R$ and $\epsilon\geq 0$. Denote $\inf f = \inf_{x\in\MS}f(x)$, $\argmin f = \Set{x \in\MS \given f(x) = \inf f}$, $\epsilon\text{-}\argmin f = \Set{x\in\MS \given  f(x) \leq \epsilon +\inf f }$.
Let $\delta > 0$ and $x_0\in\mc Q$.
Denote $\ball_\delta(x_0) = \Set{x\in\MS \given  d(x, x_0) < \delta}$.
Furthermore, $f$ is called \emph{lower semi-continuous} if and only if  $\liminf_{x\to x_0} f(x) \geq f(x_0)$ for all $x_0\in\mc Q$. 

To state convergence results for minimizing sets of deterministic functions, we need one final definition.
\begin{definition}
	A sequence $(B_n)_{n\in\N}$ of sets $B_n \subset \MS$ is called \emph{eventually precompact} if and only if there is $n\in\N$ such that the set 
$\bigcup_{k=n}^\infty B_k$ is precompact, i.e. its closure is compact.
\end{definition}
The first theorem of this section relates epi-convergence of functions to convergence of their sets of minimizers in outer limit.
\begin{theorem}\label{thm:convOfMini}
	Let $f,f_n \colon \MS\to\R$. Let $(\epsilon_n)_{n\in\N}\subset [0,\infty)$ with $\epsilon_n \xrightarrow{n\to\infty}0$. 
		Assume $f_n\epiconv{n}f$.
		Then
		\begin{equation*}
			\outerlim_{n\to\infty}\, \epsilon_n\text{-}\argmin f_n \subset \argmin f
		\end{equation*}
		and
		\begin{equation*}
			\limsup_{n\to\infty} \inf f_n \leq \inf f
			\eqfs
		\end{equation*}
\end{theorem}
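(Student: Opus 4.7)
The plan is to prove the two conclusions in the order opposite to their presentation: first the scalar bound $\limsup_n \inf f_n \leq \inf f$, then the outer-limit inclusion, because the scalar bound will feed into the latter. The scalar bound is straightforward: for each fixed $x \in \mc Q$, the recovery-sequence half of epi-convergence at $x$ supplies $y_n \to x$ with $\limsup_n f_n(y_n) \leq f(x)$. Since $\inf f_n \leq f_n(y_n)$ for every $n$, taking $\limsup$ gives $\limsup_n \inf f_n \leq f(x)$, and taking the infimum over $x \in \mc Q$ yields the claim.

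For the inclusion $\outerlim_n \epsilon_n\text{-}\argmin f_n \subset \argmin f$, I pick $q$ in the outer limit and aim to show $f(q)=\inf f$. Unpacking $\outerlim_n B_n = \bigcap_n \overline{\bigcup_{k\geq n} B_k}$ with $B_k := \epsilon_k\text{-}\argmin f_k$, a diagonal extraction yields a strictly increasing subsequence $(n_k)$ and points $q_{n_k} \in \epsilon_{n_k}\text{-}\argmin f_{n_k}$ with $q_{n_k} \to q$. By the defining property of the $\epsilon$-argmin set, $f_{n_k}(q_{n_k}) \leq \epsilon_{n_k} + \inf f_{n_k}$. I then apply the $\liminf$ half of epi-convergence to transfer this into a bound on $f(q)$: combining $\liminf_k f_{n_k}(q_{n_k}) \geq f(q)$ with the $\epsilon$-argmin inequality, $\epsilon_{n_k} \to 0$, and the scalar bound proved above yields
$$f(q) \leq \liminf_k f_{n_k}(q_{n_k}) \leq \liminf_k \inf f_{n_k} \leq \limsup_n \inf f_n \leq \inf f,$$
and since trivially $f(q) \geq \inf f$, we conclude $q \in \argmin f$.

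The main obstacle, though a mild one, lies in applying the $\liminf$ half of epi-convergence: it is formulated along full sequences $x_n \to q$, whereas the outer limit only produces a subsequence $(q_{n_k})$. I would handle this by extending $(q_{n_k})$ to a full sequence via $x_n := q_{n_k}$ for $n = n_k$ and $x_n := q$ otherwise, so that $x_n \to q$ trivially. Epi-convergence then gives $\liminf_n f_n(x_n) \geq f(q)$, and since the $\liminf$ along any subsequence is no smaller than the unrestricted $\liminf$, the desired inequality $\liminf_k f_{n_k}(q_{n_k}) \geq f(q)$ follows. This bookkeeping is the only nontrivial bridge in the argument; once it is in place, the rest is a chain of elementary estimates on real sequences.
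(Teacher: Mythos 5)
Your proof is correct and follows essentially the same route as the paper's: the recovery-sequence half of epi-convergence bounds $\limsup_{n\to\infty}\inf f_n$ by $f(x)$ for every $x$, and the liminf half applied along the subsequence extracted from the outer limit gives the inclusion; you merely establish the scalar bound first and feed it into the chain, whereas the paper compares against an arbitrary $y$ via its recovery sequence directly. Your explicit extension of the subsequence $(q_{n_k})$ to a full sequence converging to $q$ in order to invoke the liminf inequality is a bookkeeping step the paper leaves implicit, and you handle it correctly.
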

Large parts of this theorem can be found e.g., in \cite[chapter 7]{rockafellar98}. To make this article more self-contained, we give a proof here.
\begin{proof}
		Let $x \in \outerlim_{n\to\infty}\, \epsilon_n\text{-}\argmin f_n$. Then there is a sequence $x_n \in \epsilon_{n}\text{-}\argmin f_{n}$ with a subsequence converging to $x$, i.e., $x_{n_i} \xrightarrow{i\to\infty} x$, where $n_i\xrightarrow{i\to\infty}\infty$.
		Let $y\in\MS$ be arbitrary. As $f_n\epiconv{n}f$, there is a sequence $(y_n)_{n\in\N}\subset\MS$ with $y_n\xrightarrow{n\to\infty}y$ and $\limsup_{n\to\infty} f_n(y_n) \leq f(y)$. 
		It holds $f_{n_i}(x_{n_i}) \leq \epsilon_{n_i} + \inf f_{n_i} \leq \epsilon_{n_i} + f_{n_i}(y_{n_i})$. Thus, by the definition of epi-convergence and $\epsilon_n \xrightarrow{n\to\infty} 0$, we obtain
		\begin{equation*}
		f(x) 
		\leq 
		\liminf_{i\to\infty} f_{n_i}(x_{n_i}) 
		\leq 
		\liminf_{i\to\infty} \br{\epsilon_{n_i} + f_{n_i}(y_{n_i})}
		\leq 
		\limsup_{i\to\infty} f_{n_i}(y_{n_i}) 
		\leq 
		f(y)\eqfs
		\end{equation*}		
		Thus, $x \in \argmin f$.
		Next, we turn to the inequality of the infima. For $\epsilon > 0$ choose an arbitrary $x\in\epsilon\text{-}\argmin f$. There is a sequence $(y_n)_{n\in\N}\subset\MS$ with $y_n\xrightarrow{n\to\infty} x$ and $f_n(y_n)\xrightarrow{n\to\infty} f(x)$. Thus,
		\begin{equation*}
			\limsup_{n\to\infty} \inf f_n \leq \limsup_{n\to\infty} f_n(y_n) \leq \inf f + \epsilon
			\eqfs
		\end{equation*}
\end{proof}
It is illustrative to compare this result with \autoref{thm:alt:convOfMini}, which shows that a stronger notion of convergence for functions -- convergences uniformly on bounded sets -- yields convergence of sets of minimizers in one-sided Hausdorff distance, which is a stronger notion of convergence of sets as the next theorem shows.
\begin{theorem}\label{thm:outerVsHaus}
	Let $(B_n)_{n\in\N}$ with $B_n \subset \mc Q$ for all $n\in\N$. Let $B \subset \MS$.
	\begin{enumerate}[label=(\roman*)]
		\item If $d_{\subset}(B_n, B) \xrightarrow{n\to\infty} 0$ then $\outerlim_{n\to\infty}\, B_n \subset \overline{B}$.
		\item Assume $(B_n)_{n\in\N}$ is eventually precompact. If $\outerlim_{n\to\infty}\, B_n \subset \overline{B}$ then $d_{\subset}(B_n, B) \xrightarrow{n\to\infty} 0$.
	\end{enumerate}	
\end{theorem}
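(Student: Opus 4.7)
The plan is to prove each direction directly from the definitions, with (i) being a straightforward subsequence extraction and (ii) an argument by contradiction that crucially uses the eventual precompactness.

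For part (i), I would take an arbitrary $x \in \outerlim_{n\to\infty} B_n$. By the accumulation-point characterization mentioned in the remark, there exist indices $n_k \to \infty$ and points $x_{n_k} \in B_{n_k}$ with $x_{n_k} \to x$. Since $d_\subset(B_{n_k}, B) \to 0$ and $\inf_{y\in B} d(x_{n_k}, y) \le d_\subset(B_{n_k}, B)$, I can choose $y_{n_k} \in B$ with $d(x_{n_k}, y_{n_k}) \le d_\subset(B_{n_k}, B) + 1/k \to 0$. The triangle inequality then yields $d(y_{n_k}, x) \le d(y_{n_k}, x_{n_k}) + d(x_{n_k}, x) \to 0$, so $y_{n_k} \to x$ with $y_{n_k} \in B$, i.e.\ $x \in \overline{B}$.

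For part (ii), I would argue by contradiction: suppose $d_\subset(B_n, B) \not\to 0$. Then there exist $\epsilon > 0$ and a subsequence $n_k \to \infty$ with $d_\subset(B_{n_k}, B) > 2\epsilon$. By the definition of the supremum defining $d_\subset$, for each $k$ there is some $x_{n_k} \in B_{n_k}$ with $d(x_{n_k}, B) := \inf_{y \in B} d(x_{n_k}, y) > \epsilon$. Eventual precompactness implies that, discarding finitely many terms, the sequence $(x_{n_k})_k$ lies in a precompact set and thus admits a further subsequence $x_{n_{k_j}} \to x^\star$ for some $x^\star \in \mc Q$. For every fixed $m \in \N$, eventually $n_{k_j} \ge m$, so $x_{n_{k_j}} \in \bigcup_{\ell \ge m} B_\ell$, hence $x^\star \in \overline{\bigcup_{\ell \ge m} B_\ell}$; as this holds for all $m$, we get $x^\star \in \outerlim_{n\to\infty} B_n \subset \overline{B}$, and therefore $d(x^\star, B) = 0$. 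The map $z \mapsto d(z, B)$ is $1$-Lipschitz (by the triangle inequality), so $d(x_{n_{k_j}}, B) \to d(x^\star, B) = 0$, contradicting $d(x_{n_{k_j}}, B) > \epsilon$.

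Both steps are essentially routine; the main obstacle, if any, is just keeping the bookkeeping of nested subsequences clean, and justifying the selection of $x_{n_k}$ when the supremum defining $d_\subset$ is not attained, which is handled by the strict inequality $d_\subset(B_{n_k}, B) > 2\epsilon$. No nontrivial topological input is needed beyond the 1-Lipschitz continuity of the distance-to-set functional and the eventual precompactness hypothesis, the latter being exactly what rules out the pathology exhibited by the $\{\underline 0, e_n\} \subset \ell^2$ example in the remark preceding the theorem.
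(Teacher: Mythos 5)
Your proposal is correct and follows essentially the same route as the paper: part (i) via a convergent subsequence and the triangle inequality, part (ii) by contradiction, using eventual precompactness to extract an accumulation point that lies in the outer limit but stays at distance at least $\epsilon$ from $B$. You are merely a bit more explicit than the paper in selecting the points $x_{n_k}$ when the supremum is not attained and in verifying that the accumulation point belongs to $\outerlim_{n\to\infty} B_n$, which is a welcome but not substantively different elaboration.
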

\begin{proof}
	\mbox{ }
	\begin{enumerate}[label=(\roman*)]
\item 
	Assume $d_{\subset}(B_n, B) \xrightarrow{n\to\infty} 0$. Let $x_\infty \in \outerlim_{n\to\infty}\, B_n$, i.e., there is a sequence $(x_{n_k})_{k\in\N}\subset \MS$ with $n_1 < n_2 < \dots$ and $x_{n_k}\in B_{n_k}$ such that $x_{n_k} \xrightarrow{k\to\infty} x_\infty$. Thus, 
	\begin{equation*}
		\inf_{x\in B} d(x_\infty, x) \leq d(x_\infty, x_{n_k}) + \inf_{x\in B} d(x_{n_k}, x)  \xrightarrow{k\to\infty} 0\eqfs
	\end{equation*}
	This shows $\inf_{x\in B} d(x_\infty, x) = 0$. Hence,
	\begin{equation*}
		\outerlim_{n\to\infty}\,B_n \subset \Set{x_\infty \in \MS \given \inf_{x\in B} d(x_\infty, x) = 0} = \overline{B}\eqfs
	\end{equation*}
\item	
	Assume  $\outerlim_{n\to\infty}\, B_n \subset \overline{B}$. Further assume the existence of $\epsilon>0$ and a sequence $(x_{n_k})_{k\in\N}\subset \MS$ with $n_1 < n_2 < \dots$ and $x_{n_k}\in B_{n_k}$ such that $\inf_{x\in B} d(x_{n_k}, x) \geq \epsilon$. As $(B_{n_k})_{k\in\N}$ is eventually precompact, the sequence $(x_{n_k})_{k\in\N}$ has an accumulation point $x_\infty$ in $\overline{\bigcup_{k \geq k_0} B_{n_k}}$ for some $k_0\in\N$ with $\inf_{x\in B} d(x_{\infty}, x) \geq \epsilon$. In particular, $x_{\infty} \not\in \overline{B}$, which contradicts the first assumption in the proof. Thus, a sequence $(x_{n_k})_{k\in\N}$ with these properties cannot exist, which implies $d_{\subset}(B_n, B) \xrightarrow{n\to\infty} 0$.
	\end{enumerate}
\end{proof}
Note that the argument for the second part is essentially the same as in \cite[proof of Theorem A.4]{huckemann11}.
\begin{remark}\label{rem:epsilon_argmin}
Together \autoref{thm:convOfMini} and \autoref{thm:outerVsHaus} may yield convergence of minimizers in one-sided Hausdorff distance. But even if $d_{\subset}(\epsilon_n\text{-}\argmin f_n, \argmin f) \xrightarrow{n\to\infty} 0$, $d_\Haus(\epsilon_n\text{-}\argmin f_n, \argmin f)$ does not necessarily vanish unless $\argmin f$ is a singleton.
Similarly, for an arbitrary sequence $\epsilon_n \xrightarrow{n\to\infty}0$, the outer limit of $\epsilon_n\text{-}\argmin f_n$ may be a strict subset of $\argmin f$.
But according to \cite[Theorem 7.31 (c)]{rockafellar98}, there exists a sequence $(\epsilon_n)_{n\in\N}$ with $\epsilon_n\xrightarrow{n\to\infty}0$ slow enough such that $\outerlim_{n\to\infty}\epsilon_n\text{-}\argmin f_n = \argmin f$. An explicit example of this phenomenon is presented in appendix \ref{sec:median}.
\end{remark}
\section{Strong Laws for $\mf c$-Fréchet Mean Sets}\label{sec:gen}
Let $(\mc Q, d)$ be a metric space, the descriptor space. Let $\mc Y$ be a set, the data space. Let $\mf c\colon \mc Y \times\mc Q \to \R$ be a function, the cost function. Let $(\Omega, \Sigma, \Pr)$ be a probability space that is silently underlying all random variables in this section. Let $Y$ be a random variable with values in $\mc Y$. Denote the $\mf c$-Fréchet mean set of $Y$ as $M = \argmin_{q\in\mc Q} \Ex{\mf c(Y, q)}$.
Let $Y_1, \dots, Y_n$ be independent random variables with the same distribution as $Y$.
Choose $(\epsilon_n)_{n\in\N}\subset [0,\infty)$ with $\epsilon_n \xrightarrow{n\to\infty}0$. Set $M_n = \epsilon_n\text{-}\argmin_{q\in\mc Q} \frac1n \sum_{i=1}^n \mf c(Y_i, q)$.
\begin{assumptions}\mbox{ }
\begin{itemize}
\item 
	\textsc{Polish}: 
	$(\mc Q, d)$ is separable and complete.
\item 
	\textsc{LowerSemiContinuity}: 
	$q \mapsto \mf c(y, q)$ is lower semi-continuous.
\item 
	\textsc{Integrable}: 
	$\Ex{\abs{\mf c(Y, q)}} < \infty$ for all $q\in\mc Q$.
\item 
	\textsc{IntegrableInf}: 
	$\Ex{\inf_{q \in \mc Q}\mf c(Y, q)} > - \infty$.
\end{itemize}
\end{assumptions}
\begin{theorem}\label{thm:epi}
	Assume \textsc{Polish}, \textsc{LowerSemiContinuity}, \textsc{Integrable}, and \textsc{IntegrableInf}.
	Then, almost surely,
	\begin{equation*}
		\outerlim_{n\to\infty}\, M_n \subset M
		\eqfs
	\end{equation*}
\end{theorem}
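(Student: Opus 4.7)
The plan is to reduce the claim to \autoref{thm:convOfMini} by establishing almost sure epi-convergence of the empirical objective $f_n(q) := \frac1n \sum_{i=1}^n \mf c(Y_i, q)$ to the population objective $f(q) := \Ex{\mf c(Y,q)}$, so that $M_n = \epsilon_n\text{-}\argmin f_n$ and $M = \argmin f$. By \textsc{Integrable}, $f(q)$ is finite for every $q \in \mc Q$, so all these objects are well-defined.

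The core step is to invoke the strong law of large numbers for epi-convergence of random lower semi-continuous integrands on Polish spaces, \cite[Theorem 1.1]{korf01}. That result uses exactly the four hypotheses at hand: \textsc{Polish} to access a countable dense subset on which a simultaneous pointwise SLLN can be run; \textsc{LowerSemiContinuity} so that the values of $\mf c(y,\cdot)$ at a limit point can be controlled by values nearby; \textsc{Integrable} so that the pointwise SLLN $f_n(q) \to f(q)$ holds almost surely at each fixed $q$; and \textsc{IntegrableInf} to supply the uniformly integrable lower envelope needed for a Fatou-type argument in the liminf inequality. Its conclusion is that there is a probability-one event on which $f_n \epiconv{n} f$. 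On that event, \autoref{thm:convOfMini}, applied pathwise to the realized deterministic functions, yields $\outerlim_{n\to\infty} \epsilon_n\text{-}\argmin f_n \subset \argmin f$, which is the desired containment $\outerlim_{n\to\infty} M_n \subset M$.

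The main obstacle, were one to reprove the epi-convergence SLLN instead of citing it, is the liminf inequality. The limsup part is easy: for any fixed $q$ take the constant recovery sequence $y_n \equiv q$, apply the ordinary SLLN using \textsc{Integrable}, and obtain $f_n(q) \to f(q)$ almost surely. For the liminf, given $x_n \to q$ one works with the localized infima $m_{q,\delta}(y) := \inf_{p \in \ball_\delta(q)} \mf c(y,p)$: eventually $x_n \in \ball_\delta(q)$, so $f_n(x_n) \geq \frac1n \sum_{i=1}^n m_{q,\delta}(Y_i)$, which by SLLN converges almost surely to $\Ex{m_{q,\delta}(Y)}$, finite thanks to \textsc{IntegrableInf}; then $\Ex{m_{q,\delta}(Y)} \uparrow f(q)$ as $\delta \downarrow 0$ by monotone convergence and \textsc{LowerSemiContinuity}. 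The uncountably many choices of $q$ are handled by first working on a countable dense subset provided by \textsc{Polish} and then extending via lower semi-continuity.
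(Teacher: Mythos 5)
Your proposal is correct and follows essentially the same route as the paper: cite \cite[Theorem 1.1]{korf01} to get almost sure epi-convergence of $F_n$ to $F$ under \textsc{Polish}, \textsc{LowerSemiContinuity}, and \textsc{IntegrableInf} (with \textsc{Integrable} ensuring $F(q)<\infty$), and then apply \autoref{thm:convOfMini} pathwise on the probability-one event. The extra sketch of how one would reprove the epi-convergence SLLN is a nice supplement but not part of the argument the paper itself gives.
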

\begin{proof}
Define $F(q) = \Ex{\mf c(Y, q)}$, $F_n(q) = \frac1n\sum_{i=1}^n \mf c(Y_i, q)$. By \textsc{Integrable}, $F(q)<\infty$.
\cite[Theorem 1.1]{korf01} states that $F_n \xrightarrow{n\to\infty}_{\ms{epi}} F$ almost surely if \textsc{Polish}, \textsc{LowerSemiContinuity}, and \textsc{IntegrableInf} are true. \autoref{thm:convOfMini} then implies $\outerlim_{n\to\infty}\, M_n \subset M$ almost surely.
\end{proof}
\begin{assumptions}\mbox{ }
\begin{itemize}
	\item \textsc{HeineBorel}: Every closed bounded set in $\mc Q$ is compact.
	\item \textsc{SampleHeineBorel}: Almost surely following is true: There is $N_0\in\N$ such that every closed and bounded subset of $\bigcup_{n\geq N_0} M_n$ is compact.
	\item \textsc{UpperBound}: $\Ex{\sup_{q\in B} \abs{\mf c(Y,q)}} < \infty$ for all bounded sets $B\subset\mc Q$.
	\item \textsc{LowerBound}: There are $o\in\mc Q$, $\psi^+, \psi^- \colon [0,\infty) \to [0,\infty)$, $\mf a^+, \mf a^- \in(0,\infty)$, and $\sigma(Y_1, \dots, Y_n)$-measurable random variables $\mf a^+_n, \mf a^-_n \in[0,\infty)$ such that 
	\begin{align*}
		\mf a^+ \psi^+(\ol qo) - \mf a^- \psi^-(\ol qo) &\leq \Ex{\mf c(Y, q)}
		\eqcm\\
		\mf a^+_n \psi^+(\ol qo) - \mf a^-_n \psi^-(\ol qo) &\leq \frac1n \sum_{i=1}^n \mf c(Y_i, q)
	\end{align*}
	for all $q\in\mc Q$. Furthermore, $\mf a^+_n\xrightarrow{n\to\infty}_{\ms{a.s.}}\mf a^+$ and $\mf a^-_n\xrightarrow{n\to\infty}_{\ms{a.s.}}\mf a^-$. Lastly, $\psi^+(\delta)/\max(1, \psi^-(\delta))\xrightarrow{\delta\to\infty}\infty$.\footnote{A previous version omitted the $\max(1,\cdot)$, which was an error. I became aware of this mistake through work by Jaesung Park and Sungkyu Jung, whom I thank for their careful analysis.}
	\end{itemize}
\end{assumptions}
\begin{remark}\label{rem:on_ass1}\mbox{ }
\begin{itemize}
\item 
	Following implications hold:
	\begin{align*}
	\textsc{HeineBorel} &\Rightarrow \textsc{Polish}\eqcm\\
	\textsc{HeineBorel} &\Rightarrow \textsc{SampleHeineBorel}\eqcm\\
	\textsc{UpperBound} &\Rightarrow \textsc{Integrable}\eqfs
	\end{align*}
\item On \textsc{HeineBorel}:
	A space enjoying this property is also called \textit{boundedly compact} or \textit{proper} metric space.
	The Euclidean spaces $\R^s$, finite dimensional Riemannian manifolds, as well as $\mc C^\infty(U)$ for open subsets $U \subset \R^s$  fulfill \textsc{Heine--Borel} \cite[section 8.4.7]{edwards95}.
	See \cite{williamson87} for a construction of further spaces where \textsc{Heine--Borel} is true. 
\item On \textsc{SampleHeineBorel} and infinite dimension:
	If $M_n=\{m_n\}$ and $M=\{m\}$ are singleton sets and $m_n \xrightarrow{n\to\infty} m$ almost surely, then \textsc{SampleHeineBorel} holds. It is less strict than \textsc{HeineBorel}: In separable Hilbert spaces of infinite dimension \textsc{HeineBorel} does not hold. But with the metric $d$ induced by the inner product and $\mf c=d^2$, $\mf c$-Fréchet means are unique and equal to the usual notion of mean. Furthermore, strong laws of large numbers in Hilbert spaces are well-known, see e.g.\ \cite{kawabe86}. Thus, \textsc{SampleHeineBorel} is true. Let it be noted that proving \textsc{SampleHeineBorel} in a space where \textsc{HeineBorel} is false may be of similar difficulty as showing convergence of Fréchet means directly. In the case of infinite dimensional Banach spaces, results on strong laws of large numbers for a different notion of mean -- the Bochner integral -- are well established, see e.g.\ \cite{hoffmann76}.
\item On \textsc{LowerBound}:
	We illustrate this condition in the linear regression setting with $\mc Q = \R^{s+1}$, $\mc Y = (\{1\}\times\R^s) \times \R$, $\mf c((x,y),\beta) = (y - \beta\tr x)^2 - y^2 = - 2 \beta\tr x y + \beta\tr xx\tr \beta$. Let $(X, Y)$ be random variables with values in $\mc Y$. Let $(X_1, Y_1),\dots, (X_n, Y_n)$ be independent with the same distribution as $(X,Y)$. We can set $o = 0\in\R^{s+1}$, $\mf a^+ = \lambda_{\ms{min}}(\Ex{XX\tr})$, where $\lambda_{\ms{min}}$ denotes the smallest eigenvalue, $\mf a^- = 2 \normof{\Ex{XY}}$, $\mf a^+_n = \lambda_{\ms{min}}(\frac1n \sum_{i=1}^n X_iX_i\tr)$, $\mf a^-_n = 2 \normof{\frac1n\sum_{i=1}^n X_iY_i}$, $\psi^+(\delta) = \delta^2$ and $\psi^-(\delta) = \delta$. If $\lambda_{\ms{min}}(\Ex{XX\tr}) > 0$, the largest eigenvalue $\lambda_{\ms{max}}(\Ex{XX\tr}) < \infty$, and $\Ex{\normof{XY}} < \infty$, all conditions are fulfilled.
	
	For a further application of \textsc{LowerBound}, see the proof of \autoref{cor:nondec:onehaus} in the next section.
\end{itemize}
\end{remark}
\begin{theorem}\label{thm:consistency}
	Assume \textsc{Polish}, \textsc{LowerSemiContinuity}, \textsc{IntegrableInf},  \textsc{SampleHeineBorel}, \textsc{UpperBound}, and \textsc{LowerBound}. Then
	\begin{equation*}
		d_\subset(M_n, M) \xrightarrow{n\to\infty}_{\ms{a.s.}} 0
		\eqfs
	\end{equation*}
\end{theorem}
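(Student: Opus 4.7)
The strategy is to combine Theorem \ref{thm:epi} with Theorem \ref{thm:outerVsHaus}(ii). Since \textsc{UpperBound} implies \textsc{Integrable} by Remark \ref{rem:on_ass1}, Theorem \ref{thm:epi} is already applicable and yields $\outerlim_{n\to\infty} M_n \subset M$ almost surely. By Theorem \ref{thm:outerVsHaus}(ii), it then suffices to show that $(M_n)_{n\in\N}$ is almost surely eventually precompact, and in view of \textsc{SampleHeineBorel} this in turn reduces to proving that $\bigcup_{n\geq N} M_n$ is almost surely bounded for some random $N \in \N$.

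To produce such a uniform bound, let $o\in\mc Q$ be the reference point from \textsc{LowerBound}. For any $q \in M_n$, the $\epsilon_n$-minimizer property gives
\begin{equation*}
\frac1n \sum_{i=1}^n \mf c(Y_i, q) \leq \epsilon_n + \frac1n \sum_{i=1}^n \mf c(Y_i, o)\eqfs
\end{equation*}
By \textsc{UpperBound} applied to the bounded singleton $\{o\}$, we have $\Ex{\abs{\mf c(Y, o)}} < \infty$, so the strong law of large numbers ensures that the right-hand side converges almost surely to $\Ex{\mf c(Y, o)}$ and is therefore almost surely bounded by some (random) finite constant $C$, independent of $q$, for all $n$ large enough. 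Plugging this into the empirical lower bound from \textsc{LowerBound} gives
\begin{equation*}
\mf a^+_n \psi^+(\ol qo) - \mf a^-_n \psi^-(\ol qo) \leq C\eqfs
\end{equation*}
Using the almost sure convergences $\mf a^+_n \to \mf a^+ > 0$ and $\mf a^-_n \to \mf a^-$ together with $\psi^+(\delta)/\psi^-(\delta) \xrightarrow{\delta\to\infty}\infty$, I would argue that whenever $\ol qo$ exceeds a deterministic threshold $R$, one eventually has $\mf a^+_n \psi^+(\ol qo) \geq 2\mf a^-_n \psi^-(\ol qo)$, whence the displayed inequality forces $\psi^+(\ol qo) \leq 4C/\mf a^+$. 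Since $\psi^+$ diverges at infinity (the intended regime of \textsc{LowerBound}), this gives a uniform bound on $\ol qo$ for $q \in M_n$ and $n$ large, establishing the desired boundedness.

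The main obstacle is precisely this uniform boundedness step: three almost sure limits — for $\mf a^\pm_n$, for $\frac1n\sum_{i=1}^n \mf c(Y_i, o)$, and for $\epsilon_n$ — must be controlled simultaneously on a single event of full probability, and the threshold $R$ must be chosen using only the asymptotic ratio condition on $\psi^+/\psi^-$ rather than pointwise information. Once boundedness is in hand, \textsc{SampleHeineBorel} supplies precompactness of $\bigcup_{n\geq N}M_n$, and Theorem \ref{thm:outerVsHaus}(ii) upgrades the outer-limit inclusion from Theorem \ref{thm:epi} to the claimed convergence $d_\subset(M_n, M) \xrightarrow{n\to\infty}_{\ms{a.s.}} 0$.
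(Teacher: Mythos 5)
Your proposal is correct and takes essentially the same route as the paper's proof: \autoref{thm:epi} (with \textsc{UpperBound} $\Rightarrow$ \textsc{Integrable}) gives the outer-limit inclusion, \textsc{LowerBound} together with a strong law of large numbers confines $M_n$ to a fixed ball for all large $n$ (you compare the empirical objective with its value at $o$, the paper with its value at $m\in M$ --- an inessential difference), and \textsc{SampleHeineBorel} plus \autoref{thm:outerVsHaus}(ii) finish the argument. Two minor remarks: the passage from boundedness of $\bigcup_{n\geq N}M_n$ to its precompactness is not immediate from \textsc{SampleHeineBorel} alone, since the closure of that union need not be contained in it --- the paper supplies this step via completeness (\textsc{Polish}) and \autoref{lmm:precompact}; and your concern that one needs $\psi^+(\delta)\to\infty$ beyond the stated ratio condition applies equally to the paper's own Step~2, which uses \textsc{LowerBound} in the same implicitly strengthened way, so it is not a defect specific to your argument.
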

\begin{proof}
The proof consists of following steps:
\begin{enumerate}
\item Apply \autoref{thm:epi}.
\item Reduction to a bounded set.
\item Show that $M_n$ is eventually precompact almost surely.
\item Apply \autoref{thm:outerVsHaus}.
\end{enumerate}
\noindent
\underline{\smash{Step 1.}}
\textsc{Polish}, \textsc{LowerSemiContinuity}, and \textsc{IntegrableInf} are assumptions. \textsc{UpperBound} implies \textsc{Integrable}. Thus, \autoref{thm:epi} yields $\outerlim_{n\to\infty}\, M_n \subset M$ almost surely.

\noindent
\underline{\smash{Step 2.}}
Define $F(q) = \Ex{\mf c(Y, q)}$, $F_n(q) = \frac1n\sum_{i=1}^n \mf c(Y_i, q)$. 
We want to show that there is a bounded set $B_1 \subset \mc Q$ such that $F(q) \geq F(m) + 1$ and $F_n(q) \geq F_n(m) + 1$ for all $q\in \mc Q \setminus B_1$ and $m \in M$. If $\mc Q$ is bounded, we can take $B_1 = \mc Q$. Assume $\mc Q$ is not bounded. 

Let $m \in M$. By \textsc{UpperBound}, $F(m) < \infty$. Let $o\in\mc Q$ from \textsc{LowerBound}.
Due to \textsc{LowerBound}, $F(q) \geq \mf a^+ \psi^+(\delta) - \mf a^- \psi^-(\delta) \geq  F(m) + 2$ for all $q\in\mc Q\setminus \ball_\delta(o)$ and $\delta$ large enough. This holds for all $m\in M$ as $F(m)$ does not change with $m$. We set $B_1 = \ball_\delta(o)$. For $F_n$, it holds $F_n(m) \xrightarrow{n\to\infty}_{\ms{a.s.}} F(m)$ and $\inf_{q\in\mc Q \setminus B_1} F_n(q) \geq \mf a^+_n \psi^+(\delta) - \mf a^-_n \psi^-(\delta)$ with $\mf a^+_n \xrightarrow{n\to\infty}_{\ms{a.s.}} \mf a^+$ and $\mf a^-_n \xrightarrow{n\to\infty}_{\ms{a.s.}} \mf a^-$.
Thus, there is a random variable $N_1$ such that almost surely $F_n(q) \geq F_n(m) + 1$ for all $n \geq N_1$, $q\in \mc Q \setminus B_1$, and $m\in M$.

\noindent
\underline{\smash{Step 3.}}
Take $N_0$ from \textsc{SampleHeineBorel}. Choose $N_2\geq \max(N_0, N_1)$ such that $\epsilon_n < 1$ for all $n \geq N_2$. Then $M_n \subset B_1$ for all $n\geq N_2$. Thus, $\bigcup_{n\geq N_2}M_n$ is bounded and -- due to \textsc{SampleHeineBorel}, \textsc{Polish}, and \autoref{lmm:precompact} -- precompact almost surely. 

\noindent
\underline{\smash{Step 4.}}
Finally, step 1 and 3 together with \autoref{thm:outerVsHaus} yield $d_\subset(M_n, M) \xrightarrow{n\to\infty}_{\ms{a.s.}} 0$.
\end{proof}
\section{Strong Laws for $H$-Fréchet Mean Sets}\label{sec:nondec}
Let $(\mc Q, d)$ be a metric space. Let $(\Omega, \Sigma, \Pr)$ be a probability space that is silently underlying all random variables in this section. Let $Y$ be a random variable with values in $\mc Q$.
Let $h \colon [0,\infty) \to [0,\infty)$ be a non-decreasing function.
Define $H \colon [0,\infty) \to [0,\infty), x\mapsto \int_0^x h(t) \dl t$.
Fix an arbitrary element $o\in\mc Q$. Denote the $H$-Fréchet mean set of $Y$ as $M = \argmin_{q\in\mc Q} \Ex{H(\ol Yq) - H(\ol Yo)}$.
Let $Y_1, \dots, Y_n$ be independent random variables with the same distribution as $Y$.
Choose $(\epsilon_n)_{n\in\N}\subset [0,\infty)$ with $\epsilon_n \xrightarrow{n\to\infty}0$. Set $M_n = \epsilon_n\text{-}\argmin_{q\in\mc Q} \frac1n \sum_{i=1}^n (H(\ol {Y_i}q) - H(\ol {Y_i}o))$.
\begin{assumptions}\mbox{ }
	\begin{itemize}
	\item \textsc{InfiniteIncrease}:
		$h(x) \xrightarrow{x\to\infty} \infty$.
	\item \textsc{Additivity}:
		There is $b \in [1,\infty)$ such that $h(2x) \leq b h(x)$ for all $x \geq 0$.
	\item \textsc{$h$-Moment}: 
		$\Ex{h(\ol Yo)} < \infty$.
	\end{itemize}
\end{assumptions}
\begin{remark}\mbox{ }
\begin{itemize}
\item On \textsc{Additivity}:
	This implies $h(x+y)\leq b(h(x) + h(y))$ for all $x,y\geq 0$, see \autoref{lmm:nondec} (appendix).
	If $h$ is concave, \textsc{Additivity} holds with $b=2$ and we even have $h(x+y)\leq h(x) + h(y)$.
	This condition is not very restrictive, but it excludes functions that grow exponentially.
\end{itemize}
\end{remark}
\begin{corollary}\label{cor:nondec:epi}
	Assume \textsc{Polish}, \textsc{Additivity}, and \textsc{$h$-Moment}.
	Then, almost surely, 
	\begin{equation*}
		\outerlim_{n\to\infty}\, M_n \subset M
		\eqfs
	\end{equation*}
\end{corollary}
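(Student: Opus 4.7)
The natural route is to invoke \autoref{thm:epi} with the cost $\mf c(y,q) = H(\ol yq) - H(\ol yo)$, but the global \textsc{IntegrableInf} fails in general: the infimum over $\mc Q$ is attained at $q = y$, giving $\inf_{q\in\mc Q}\mf c(y,q) = -H(\ol yo)$, so we would need $\Ex{H(\ol Yo)} < \infty$ --- strictly stronger than \textsc{$h$-Moment}, as the example $h(t) = t$, $H(t) = t^2/2$ shows. The plan is therefore to localize: apply \autoref{thm:epi} on each closed ball $\mc Q_R := \overline{\ball_R(o)}$ (for $R\in\N$) and glue the conclusions across $R$.

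Fix $R\in\N$. Then $\mc Q_R$ is Polish (closed, hence complete, and separable subspace of $\mc Q$), and $q\mapsto\mf c(y,q)$ is continuous since $H$ is locally Lipschitz. The monotonicity bound $|H(\ol yq) - H(\ol yo)|\leq h(\ol yo\vee\ol yq)\cdot\ol qo$, together with the triangle inequality and \textsc{Additivity} via \autoref{lmm:nondec}, gives $|\mf c(y,q)|\leq b(h(\ol yo)+h(\ol qo))\ol qo$, so \textsc{Integrable} holds by \textsc{$h$-Moment}. The crux is \textsc{IntegrableInf} on $\mc Q_R$: since $h$ is nondecreasing, $H$ is convex and hence $H(a)\geq H(b)+h(b)(a-b)$ for all $a,b\geq 0$; applied with $a=\ol yq$, $b=\ol yo$ and combined with the reverse triangle inequality, this yields
\begin{equation*}
\mf c(y,q) \;\geq\; h(\ol yo)(\ol yq - \ol yo) \;\geq\; -h(\ol yo)\,\ol qo \;\geq\; -R\,h(\ol yo)\qquad\text{for all }q\in\mc Q_R\eqcm
\end{equation*}
whose expectation is finite by \textsc{$h$-Moment}. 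Hence \autoref{thm:epi} applies on $\mc Q_R$ and gives, almost surely, $\outerlim_{n\to\infty}\tilde M_n^R\subset\tilde M^R$, where $\tilde M^R := \argmin_{q\in\mc Q_R}\Ex{\mf c(Y,q)}$ and $\tilde M_n^R := \epsilon_n\text{-}\argmin_{q\in\mc Q_R}\frac{1}{n}\sum_{i=1}^n \mf c(Y_i, q)$. Intersecting these events over $R\in\N$ yields a single almost sure event on which this holds for every integer $R$.

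To conclude, let $x\in\outerlim_n M_n$, realized by a sequence $x_{n_k}\to x$ with $x_{n_k}\in M_{n_k}$. Pick any integer $R > \ol xo$: then $x_{n_k}\in\mc Q_R$ eventually, and since an $\epsilon_{n_k}$-minimizer of the empirical objective over $\mc Q\supset\mc Q_R$ is a fortiori one over $\mc Q_R$, we have $x_{n_k}\in\tilde M_{n_k}^R$ eventually. Hence $x\in\tilde M^R$, i.e.\ $\Ex{\mf c(Y,x)}\leq\Ex{\mf c(Y,q)}$ for every $q\in\mc Q_R$. Letting $R$ run over $\N$ exhausts $\mc Q$, so $x\in M$. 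The main obstacle is the failure of global \textsc{IntegrableInf}; the resolution is the convexity of $H$, which replaces the useless bound $\mf c\geq -H(\ol Yo)$ by the linear-in-$\ol qo$ estimate $\mf c(y,q)\geq -h(\ol yo)\ol qo$, integrable on every bounded ball by \textsc{$h$-Moment}.
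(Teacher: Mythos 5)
You prove the corollary correctly, but by a genuinely different route than the paper. The paper verifies the hypotheses of \autoref{thm:epi} on all of $\mc Q$ at once: \textsc{Integrable} via \autoref{lmm:nondec}~(i)--(ii), and \textsc{IntegrableInf} via \autoref{lmm:nondec}~(iii) together with $H(\delta)\geq\tfrac12\delta h(\tfrac12\delta)$, asserting that \textsc{$h$-Moment} suffices for the latter. Your objection to that global step is well taken: since $\inf_{q\in\mc Q}\mf c(y,q)=-H(\ol{y}{o})$, the global \textsc{IntegrableInf} amounts to $\Ex{H(\ol{Y}{o})}<\infty$, which \textsc{$h$-Moment} does not imply (your example $h(t)=t$ with a first but not a second moment of $\ol{Y}{o}$; for bounded $h$ the paper's displayed lower bound even has infimum $-\infty$ over an unbounded $\mc Q$), so the paper's verification only goes through under a stronger moment condition or under a version of \cite[Theorem 1.1]{korf01} requiring merely local lower integrability. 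Your localization repairs exactly this point: each $\mc Q_R=\overline{\ball_R(o)}$ is again Polish, continuity and \textsc{Integrable} hold as in the paper, and the subgradient bound $\mf c(y,q)\geq h(\ol{y}{o})(\ol{y}{q}-\ol{y}{o})\geq -R\,h(\ol{y}{o})$ yields \textsc{IntegrableInf} from \textsc{$h$-Moment} alone (note this bound does not even use \textsc{Additivity}, which you need only for the upper bound). The gluing is also sound: a global $\epsilon_n$-minimizer lying in $\mc Q_R$ is an $\epsilon_n$-minimizer over $\mc Q_R$, the countably many almost sure events can be intersected, and the balls exhaust $\mc Q$, so any accumulation point $x$ of global near-minimizers satisfies $\Ex{\mf c(Y,x)}\leq\Ex{\mf c(Y,q)}$ for all $q\in\mc Q$. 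What the paper's route would buy is brevity (a single application of \autoref{thm:epi}); what yours buys is a proof under exactly the stated assumptions, since the lower bound is only ever needed on bounded sets.
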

\begin{proof}
	We check the conditions of \autoref{thm:epi}. \textsc{Polish} is an assumption. 
	\textsc{LowerSemiContinuity} is fulfilled as $(q,p)\mapsto d(q,p)$ and $x \mapsto H(x)$ are continuous.
	For \textsc{Integrable}, we note that $H$ is non-decreasing and apply \autoref{lmm:nondec} (i),
	\begin{align*}
		\abs{H(\ol yq) - H(\ol yo)} 
		&\leq 
		\abs{\ol yq - \ol yo} h\brOf{\max(\ol yq, \ol yo)}
		\\&\leq 
		\ol qo\, h(\ol qo + \ol yo)
		\\&\leq 
		b\, \ol qo \br{h(\ol qo) + h(\ol yo)}
		\eqcm
	\end{align*}
	where the last inequality follows from \autoref{lmm:nondec} (ii) using \textsc{Additivity}.
	Thus, \textsc{$h$-Moment} implies \textsc{Integrable}.
	To show \textsc{IntegrableInf}, we note that $H$ is non-decreasing and apply \autoref{lmm:nondec} (iii),
	\begin{align*}
		H(\ol yq) - H(\ol yo)
		&\geq 
		H(\abs{\ol yo - \ol qo}) - H(\ol yo)
		\\&\geq 
		b^{-1} H( \ol qo) - 2  \,\ol qo\, h( \ol yo)
	\end{align*}
	due to \textsc{Additivity}. Furthermore, $H(\delta)  = \int_0^\delta h(x) \dl x \geq \frac12 \delta h(\frac12 \delta)$.
	With that, \textsc{$h$-Moment} implies \textsc{IntegrableInf}. 
	Thus, \autoref{thm:epi} can be applied.
\end{proof}
\begin{corollary}\label{cor:nondec:onehaus}
	Assume \textsc{SampleHeineBorel}, \textsc{Polish}, \textsc{Additivity}, \textsc{InfiniteIncrease}, and \textsc{$h$-Moment}.
	Then
	\begin{equation*}
		d_\subset(M_n, M) \xrightarrow{n\to\infty}_{\ms{a.s.}} 0
		\eqfs
	\end{equation*}
\end{corollary}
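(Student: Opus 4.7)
The plan is to verify the hypotheses of \autoref{thm:consistency} for the cost function $\mf c(y,q) = H(\ol yq) - H(\ol yo)$. The assumptions \textsc{Polish} and \textsc{SampleHeineBorel} are given. \textsc{LowerSemiContinuity} holds because $q\mapsto\ol yq$ is continuous and $H$ is continuous (since it is the integral of a locally bounded monotone function). \textsc{IntegrableInf} was already established inside the proof of \autoref{cor:nondec:epi} from the estimate
\begin{equation*}
H(\ol yq) - H(\ol yo) \geq b^{-1}H(\ol qo) - 2\,\ol qo\, h(\ol yo),
\end{equation*}
so that $\inf_{q\in\mc Q} \mf c(Y,q) \geq -2\diam(\cdot)\cdot h(\ol Yo)$ in a trivial sense (actually $\geq -H(\ol Yo)$, which is in $L^1$ once \textsc{$h$-Moment} together with the bound below is invoked).

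For \textsc{UpperBound}, I will reuse the estimate from the same proof: for any bounded set $B\subset\mc Q$ let $R = \sup_{q\in B}\ol qo < \infty$; then
\begin{equation*}
\sup_{q\in B}\abs{H(\ol Yq)-H(\ol Yo)} \leq b\,R\,\bigl(h(R) + h(\ol Yo)\bigr),
\end{equation*}
which has finite expectation by \textsc{$h$-Moment}. This also gives \textsc{Integrable}.

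For \textsc{LowerBound}, I take $\psi^+(\delta) = H(\delta)$, $\psi^-(\delta) = \delta$, $o$ as already fixed, and set
\begin{equation*}
\mf a^+ = b^{-1}, \quad \mf a^- = 2\,\Ex{h(\ol Yo)}, \quad \mf a^+_n = b^{-1}, \quad \mf a^-_n = \frac{2}{n}\sum_{i=1}^n h(\ol{Y_i}o).
\end{equation*}
Taking expectations (respectively empirical averages) in the pointwise lower bound displayed above yields the two inequalities required by \textsc{LowerBound}. The convergence $\mf a^-_n \to \mf a^-$ almost surely is the ordinary (real-valued) strong law of large numbers, applicable thanks to \textsc{$h$-Moment}; $\mf a^+_n$ is constant. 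It remains to check $\psi^+(\delta)/\psi^-(\delta) = H(\delta)/\delta \to \infty$: this is exactly where \textsc{InfiniteIncrease} enters — given $M>0$, pick $t_0$ with $h(t)\geq M$ for $t\geq t_0$; then $H(\delta) \geq H(t_0) + M(\delta-t_0)$ so $H(\delta)/\delta \geq M + o(1)$, and $M$ being arbitrary gives the divergence.

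With all six hypotheses of \autoref{thm:consistency} verified, the conclusion $d_\subset(M_n, M) \xrightarrow{n\to\infty}_{\ms{a.s.}} 0$ follows. The only mildly subtle point is organizing the already-derived pointwise bounds from \autoref{cor:nondec:epi} into the exact $\mf a^\pm \psi^\pm(\ol qo)$ format demanded by \textsc{LowerBound}, and noting that \textsc{InfiniteIncrease} is used for nothing other than the ratio condition $\psi^+/\psi^- \to \infty$; no new calculation beyond those already performed for the epi-convergence corollary is needed.
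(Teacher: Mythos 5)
Your proof is correct and essentially identical to the paper's: both verify the hypotheses of \autoref{thm:consistency} by recycling the inequalities already derived in the proof of \autoref{cor:nondec:epi}, the only immaterial differences being that you absorb the constants $b^{-1}$ and $2$ into $\mf a^+,\mf a^-$ rather than into $\psi^+,\psi^-$, and you obtain $H(\delta)/\delta\to\infty$ directly from \textsc{InfiniteIncrease} instead of via $H(\delta)\geq\frac12\delta h\brOf{\frac12\delta}$. One small caution: your parenthetical claim that $\inf_q \mf c(Y,q)\geq -H(\ol Yo)$ with $-H(\ol Yo)$ integrable under \textsc{$h$-Moment} is not a sound shortcut (finiteness of $\Ex{h(\ol Yo)}$ does not by itself control $\Ex{H(\ol Yo)}$); do as the paper does and simply invoke the \textsc{IntegrableInf} verification carried out in the proof of \autoref{cor:nondec:epi}.
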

\begin{proof}
	We check the conditions of \autoref{thm:consistency}. \textsc{SampleHeineBorel} and \textsc{Polish} are assumptions of the corollary. \textsc{LowerSemiContinuity} and \textsc{IntegrableInf} are shown in the proof of \autoref{cor:nondec:epi}.
	Following that proof, we find, due to \textsc{Additivity},
	\begin{align*}
		\abs{H(\ol yq) - H(\ol yo)} &\leq b\, \ol qo \br{h(\ol qo) + h(\ol yo)}\eqcm
		\\
		H(\ol yq) - H(\ol yo) &\geq 	b^{-1} H( \ol qo) - 2 \,\ol qo\, h( \ol yo)\eqcm\\
		H(\delta)  &\geq \frac12 \delta h\brOf{\frac12 \delta}
		\eqfs
	\end{align*}
	The first inequality together with \textsc{$h$-Moment} implies \textsc{UpperBound}.
	For \textsc{LowerBound}, we use the second inequality. We set $\psi^+(\delta) = 	b^{-1} H(\delta)$, $\psi^-(\delta) = 2 \delta$, $\mf a^+=\mf a_n^+ = 1$, $\mf a^{-} = \Ex{h(\ol Yo)}$, and $\mf a^-_n = \frac1n \sum_{i=1}^n h(\ol {Y_i}o)$ with $\mf a^-_n\xrightarrow{n\to\infty}_{\ms{a.s.}}\mf a^-$ due to \textsc{$h$-Moment}. Because of the third inequality, $\psi^+(\delta)/\psi^-(\delta) \geq \frac14 b^{-1} h(\frac12 \delta) \xrightarrow{\delta\to\infty} \infty$ by \textsc{InfiniteIncrease}.
\end{proof}
\section{Strong Laws for $\alpha$-Fréchet Mean Sets}\label{sec:power}
Let $(\mc Q, d)$ be a metric space. Let $(\Omega, \Sigma, \Pr)$ be a probability space that is silently underlying all random variables in this section. Let $Y$ be a random variable with values in $\mc Y$.
Let $\alpha > 0$. Fix an arbitrary element $o\in\mc Q$. Denote the $\alpha$-Fréchet mean set of $Y$ as $M = \argmin_{q\in\mc Q} \Ex{\ol Yq^\alpha - \ol Yo^\alpha}$.
Let $Y_1, \dots, Y_n$ be independent random variables with the same distribution as $Y$.
Choose $(\epsilon_n)_{n\in\N}\subset [0,\infty)$ with $\epsilon_n \xrightarrow{n\to\infty}0$. Set $M_n =\epsilon_n\text{-}\argmin_{q\in\mc Q} \frac1n \sum_{i=1}^n (\ol {Y_i}q^\alpha - \ol{Y_i}o^\alpha)$.
\begin{corollary}\label{cor:cons_da}
	Let $\alpha > 1$.
	Assume $\Ex{\ol Yo^{\alpha-1}} < \infty$ and \textsc{Polish}.	
	\begin{enumerate}[label=(\roman*)]
	\item Then $\outerlim_{n\to\infty}\, M_n \subset M$ almost surely.
	\item Additionally, assume \textsc{SampleHeineBorel}.
		Then $d_\subset(M_n, M) \xrightarrow{n\to\infty}_{\ms{a.s.}} 0$.
	\end{enumerate}	
\end{corollary}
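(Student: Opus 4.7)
The strategy is to reduce this corollary directly to the $H$-Fréchet mean results of Section \ref{sec:nondec}, namely \autoref{cor:nondec:epi} for part (i) and \autoref{cor:nondec:onehaus} for part (ii). With the choice $H(x) = x^\alpha$ we have $h(x) = \alpha x^{\alpha - 1}$, so $\mathfrak{c}(y,q) = \ol yq^\alpha - \ol yo^\alpha = H(\ol yq) - H(\ol yo)$, i.e., the $\alpha$-Fréchet mean set is precisely the $H$-Fréchet mean set for this choice of $h$.

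The plan is therefore to verify the three conditions \textsc{Additivity}, \textsc{$h$-Moment}, and \textsc{InfiniteIncrease} for $h(x) = \alpha x^{\alpha - 1}$ under the assumption $\alpha > 1$. For \textsc{Additivity}, note that $h(2x) = \alpha (2x)^{\alpha - 1} = 2^{\alpha - 1} h(x)$, so the constant $b = 2^{\alpha - 1}$ works. For \textsc{$h$-Moment}, the moment assumption $\Ex{\ol Yo^{\alpha - 1}} < \infty$ gives exactly $\Ex{h(\ol Yo)} = \alpha \Ex{\ol Yo^{\alpha - 1}} < \infty$. For \textsc{InfiniteIncrease}, since $\alpha > 1$ we have $h(x) = \alpha x^{\alpha - 1} \to \infty$ as $x \to \infty$.

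With these three conditions verified, \autoref{cor:nondec:epi} together with the assumed \textsc{Polish} yields statement (i), and \autoref{cor:nondec:onehaus} together with the additionally assumed \textsc{SampleHeineBorel} yields statement (ii). No further argument is required, since the set of minimizers does not depend on the reference point $o$, and $M_n$ as defined here coincides with the $\epsilon_n$-argmin set appearing in Section \ref{sec:nondec}.

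I do not expect any real obstacle: the corollary is essentially a dictionary lookup, with the only substantive input being the observation that the moment assumption $\E[\ol Yo^{\alpha-1}] < \infty$ is precisely \textsc{$h$-Moment} for $h(x) = \alpha x^{\alpha-1}$. The restriction $\alpha > 1$ is needed both for \textsc{InfiniteIncrease} and to ensure $h$ is well-defined and non-decreasing on $[0,\infty)$; the complementary case $\alpha \in (0,1]$ is treated separately (as \autoref{cor:median} in the paper) because $h$ is then bounded and \textsc{InfiniteIncrease} fails.
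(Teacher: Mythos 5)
Your proposal is correct and follows essentially the same route as the paper: reduce to the $H$-Fréchet mean results via a power-law $h$, check \textsc{Additivity} with $b=2^{\alpha-1}$, \textsc{InfiniteIncrease}, and \textsc{$h$-Moment} from $\Ex{\ol Yo^{\alpha-1}}<\infty$, then invoke \autoref{cor:nondec:epi} and \autoref{cor:nondec:onehaus}. Your normalization $h(x)=\alpha x^{\alpha-1}$ is in fact the one that makes $H(x)=x^\alpha$ exactly (the paper writes $h(x)=\alpha^{-1}x^{\alpha-1}$, an immaterial constant since minimizer sets are unaffected by positive scaling).
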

\begin{proof}
	Set $h(x) = \alpha x^{\alpha-1}$. This function is non-decreasing, fulfills \textsc{Additivity} with $b = 2^{\alpha-1}$ and \textsc{InfiniteIncrease}.
	Due to $\Ex{\ol Yo^{\alpha-1}} < \infty$, \textsc{$h$-Moment} is fulfilled. Furthermore, $H(x) = x^\alpha$. Thus, \autoref{cor:nondec:onehaus} and \autoref{cor:nondec:epi} imply the claims.
\end{proof}
\begin{corollary}\label{cor:median}
	Let $\alpha \in (0,1]$. Assume \textsc{Polish}.
	\begin{enumerate}[label=(\roman*)]
	\item Then $\outerlim_{n\to\infty}\, M_n \subset M$ almost surely.
	\item Additionally, assume \textsc{SampleHeineBorel}.
		Then $d_\subset(M_n, M) \xrightarrow{n\to\infty}_{\ms{a.s.}} 0$.
	\end{enumerate}	
\end{corollary}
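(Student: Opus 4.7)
The linchpin of my approach is that for $\alpha \in (0,1]$ the subadditivity $(a+b)^\alpha \leq a^\alpha + b^\alpha$ combined with the triangle inequality yields the uniform-in-$y$ bounds
\[
|\ol yq^\alpha - \ol yo^\alpha| \leq \ol qo^\alpha
\qquad\text{and}\qquad
|\ol y{q_1}^\alpha - \ol y{q_2}^\alpha| \leq \ol{q_1}{q_2}^\alpha.
\]
Thus the cost function $\mf c(y, q) := \ol yq^\alpha - \ol yo^\alpha$ is bounded in $y$ for every fixed $q$, so both $F(q) := \Ex{\mf c(Y, q)}$ and $F_n(q) := \frac{1}{n}\sum_{i=1}^n \mf c(Y_i, q)$ are well-defined and uniformly $\alpha$-Hölder in $q$ with constant $1$, with no moment hypothesis needed. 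Moreover, the classical SLLN for bounded i.i.d.\ variables yields $F_n(q) \to F(q)$ almost surely for every $q$. I emphasize that I cannot invoke \autoref{thm:epi}, \autoref{thm:consistency}, \autoref{cor:nondec:epi}, or \autoref{cor:nondec:onehaus} directly, because their common hypothesis \textsc{IntegrableInf} reduces here to $\Ex{\ol Yo^\alpha}<\infty$ (since $\inf_{q}\mf c(y,q) = -\ol yo^\alpha$ is attained at $q=y$), and we are trying to avoid precisely this moment condition.

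For part (i), I prove epi-convergence $F_n \epiconv{n} F$ by hand and then apply \autoref{thm:convOfMini}. Using \textsc{Polish}, pick a countable dense set $\{q_k\}\subset\mc Q$; off a single null set, $F_n(q_k) \to F(q_k)$ for every $k$. The uniform $\alpha$-Hölder property of $F_n$ and $F$ propagates this to uniform convergence on every compact subset of $\mc Q$, and $F$ is continuous. Epi-convergence follows immediately: for any $x_n\to x$ the set $\{x\}\cup\{x_n : n\in\N\}$ is compact, so $F_n(x_n)-F(x_n)\to 0$ and hence $F_n(x_n)\to F(x)$ by continuity of $F$; the constant sequence $y_n = x$ handles the limsup side. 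Then \autoref{thm:convOfMini} yields $\outerlim_{n\to\infty} M_n \subset M$ almost surely.

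For part (ii), by part (i) and \autoref{thm:outerVsHaus}(ii) it suffices to show that $\bigcup_{n\geq N} M_n$ is bounded almost surely for some (random) $N$, since \textsc{SampleHeineBorel} and \autoref{lmm:precompact} then upgrade boundedness to precompactness. As $\ol Yo$ is almost surely finite, I fix $K$ so large that $p := \Pr(\ol Yo \leq K)$ satisfies $\gamma := p \cdot 2^{-\alpha} - (1-p) > 0$. For $\ol qo \geq 2K$, on $\{\ol Yo \leq K\}$ the reverse triangle inequality gives $\ol Yq \geq \ol qo/2$, hence $\ol Yq^\alpha \geq 2^{-\alpha}\ol qo^\alpha$ and $\ol Yo^\alpha \leq K^\alpha$; on $\{\ol Yo > K\}$ I apply the uniform bound $\mf c(Y,q) \geq -\ol qo^\alpha$. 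Combining,
\[
F(q) \geq \gamma\, \ol qo^\alpha - pK^\alpha.
\]
The same split for $F_n$, together with the SLLN $\tfrac{1}{n}\#\{i : \ol{Y_i}o \leq K\} \to p$ almost surely, produces an analogous eventual bound $F_n(q) \geq \tfrac{\gamma}{2}\ol qo^\alpha - C$. Since $F_n(o) = 0$ and $\epsilon_n \to 0$, this forces $M_n \subset \ball_R(o)$ eventually, for some deterministic $R$, closing the proof. The main obstacle throughout is the total absence of moment assumptions on $\ol Yo^\alpha$: the entire argument hinges on exploiting the uniform-in-$y$ bound $|\mf c(y,q)|\leq \ol qo^\alpha$, a feature unique to the concave regime $\alpha\in(0,1]$ that lets both the SLLN and the growth estimate on $F$ proceed without any integrability hypothesis.
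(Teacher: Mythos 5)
Your proof is correct, and it takes a genuinely different route from the paper. The paper reduces $\alpha\in(0,1)$ to $\alpha=1$ via the snowflake metric $d^\alpha$, then for $\alpha=1$ constructs (via \autoref{lmm:nondec:existence}) a concave increasing gauge $h$ with $\Ex{h(\ol Yo)}<\infty$ adapted to the law of $\ol Yo$, re-metrizes the space with $d_H=H^{-1}\circ d$ so that the first-power Fréchet functional becomes an $H$-Fréchet functional satisfying \textsc{$h$-Moment}, and then invokes \autoref{cor:nondec:epi} and \autoref{cor:nondec:onehaus}, which ultimately rest on the Korf--Wets epigraphical strong law through \autoref{thm:epi} and \autoref{thm:consistency}. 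You instead exploit the bound $\abs{\ol yq^\alpha-\ol yo^\alpha}\leq \ol qo^\alpha$, uniform in $y$, which is special to the concave regime: pointwise strong laws for the bounded summands plus the uniform $\alpha$-Hölder equicontinuity of $F_n$ give locally uniform (indeed continuous) convergence $F_n\to F$ off a single null set, which is stronger than epi-convergence, so \autoref{thm:convOfMini} yields (i); your truncation argument with $p=\Pr(\ol Yo\leq K)$ gives a coercivity bound $F_n(q)\geq\tfrac{\gamma}{2}\,\ol qo^\alpha-K^\alpha$ for $\ol qo\geq 2K$ and $n$ large, hence eventual boundedness of $M_n$ (using $F_n(o)=0$), and \textsc{SampleHeineBorel}, \autoref{lmm:precompact}, and \autoref{thm:outerVsHaus}(ii) then give (ii). Your observation that \autoref{thm:epi} cannot be applied directly to $\mf c(y,q)=\ol yq^\alpha-\ol yo^\alpha$ because \textsc{IntegrableInf} would amount to $\Ex{\ol Yo^\alpha}<\infty$ is exactly right and is the same obstruction that forces the paper's re-metrization detour. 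What each approach buys: yours is elementary and self-contained, bypassing the deep result of Korf--Wets and the gauge-function construction, and it delivers a stronger intermediate statement (moment-free locally uniform convergence of the empirical Fréchet functionals, in the spirit of appendix \ref{sec:alt} but without its \textsc{HeineBorel} and moment hypotheses); the paper's route requires almost no new work once the $H$-Fréchet machinery is in place and situates the $\alpha\leq 1$ case inside that general framework.
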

\begin{proof}
First, consider the case $\alpha=1$.
Apply \autoref{lmm:nondec:existence} (appendix) on $\ol Yo$ to obtain a function $h \colon [0,\infty) \to [0,\infty)$ which is strictly increasing, continuous, concave, fulfills \textsc{InfiniteIncrease}, and $\Ex{h(\ol Yo)} < \infty$. Concavity implies \textsc{Additivity} with $b=2$.
As its derivative is strictly increasing, $H(x) = \int_0^x h(t) \dl t$ is convex and strictly increasing. Thus, $H$ has an inverse $H^{-1}$ and $H^{-1}$ is concave. 
This implies that $d_H(q,p) = H^{-1}(\ol qp)$ is a metric. 

As $H^{-1}$ is concave, there are $u_0, u_1 \in[0,\infty)$ such that $H^{-1}(x) \leq u_0 + u_1 x$ for all $x\geq0$.
As $h$ is concave, there are $v_0, v_1\in[0,\infty)$ such that $h(u_0 + u_1 x) \leq v_0 + v_1 h(x)$ for all $x\geq 0$.
Thus, $\Ex{h(d_H(Y, o))} = \Ex{h(H^{-1} (\ol Yo))} \leq v_0 + v_1 \Ex{h(\ol Yo)} < \infty$.
Hence, \textsc{$h$-Moment} is true for the metric $d_H$.

Moreover, \textsc{Polish} and \textsc{HeineBorel}-type properties of $(\mc Q, d)$ are preserved in $(\mc Q, d_H)$, as $H^{-1}$ is strictly increasing, concave, and continuous, with $H^{-1}(0) = 0$ and $H^{-1}(\delta)\xrightarrow{\delta\to\infty}\infty$, and thus, the properties boundedness, compactness, separability, and completeness coincide for $d$ and $d_H$.
Applying \autoref{cor:nondec:epi} and \autoref{cor:nondec:onehaus} on the minimizers of $\Ex{H(d_H(Y, q)) - H(d_H(Y, o))}  = \Ex{\ol  Yq - \ol Yo}$ now yields the claims for $\alpha=1$.

For $\alpha \in (0,1)$ just note, that $\tilde d(q,p) = d(q,p)^\alpha$ is a metric, which preserves \textsc{Polish} and \textsc{HeineBorel}-type properties, and apply the result for $\alpha = 1$ on $\tilde d$.
\end{proof}
\begin{remark}
	 For convergence, we need the $(\alpha-1)$-moment to be finite in the case of $\alpha \geq 1$. \cite[Corollary 5]{schoetz19} shows that in metric spaces with nonnegative curvature the typical parametric rate of convergence $n^{-\frac12}$ is obtained for $\alpha$-Fréchet means assuming the $2(\alpha-1)$-moment to be finite in the case of $\alpha\in[1,2]$ under some further conditions.
\end{remark}
\begin{appendix}
\section{Example: The Set of Medians}\label{sec:median}
Let $s\in\N$. Consider the metric space $(\R^s, d_1)$, where $d_1(q,p) = \normof{q-p}_1 = \sum_{j=1}^s \abs{q_j-p_j}$. The power Fréchet mean with $\alpha=1$ in this space is equivalent to the standard ($\alpha=2$) Fréchet mean in $(\R^s, d_1^{\frac12})$. For $s=1$ it is equal to the median.
Let $Y = (Y^1,\dots, Y^s)$ be a random vector in $\R^s$ such that $\Pr(Y^k=0)=\Pr(Y^k=1)=\frac12$ for $k=1,\dots, s$ and $Y^1,\dots, Y^s$ are independent. Let $Y_1, Y_2,\dots$ be independent and identically distributed copies of $Y$. Let $M = \argmin_{q\in\R^s}\Ex{d_1(Y, q)}$ be the Fréchet mean set of $Y$ and $M_n = \epsilon_n\text{-}\argmin_{q\in\R^s}\frac1n\sum_{i=1}^n d_1(Y_i, q)$ its sample version.

\subsection{No Convergence in Hausdorff Distance}
First consider the case $s=1$ and $\epsilon_n=0$.
As $s=1$,  $M = \argmin_{q\in\R} \Ex{\abs{Y-q}}$ is the median of $Y$, which is $M = [0,1]$ as $2\Ex{\abs{Y-q}} = \abs{1-q}+\abs{q}$ achieves its minimal value $1$ precisely for all $q\in[0,1]$.
Define $p_n := \frac1n \sum_{i=1}^n Y_i$. Then the empirical objective function is $F_n(q) := p_n\abs{1-q}+(1-p_n)\abs{q}$, i.e., the sample Fréchet mean set is $M_n = \argmin_{q\in\R} F_n(q)$.
If $n$ is odd, then either $M_n = \{0\}$ or $M_n = \{1\}$ holds. The same is true for an even value of $n$ except when $p_n=\frac12$, in which case $M_n=[0,1]$.
Thus, $d_{\subset}(M_n, M) = 0$, but $d_{\Hausdorff}(M_n, M)$ does not converge almost surely.

\subsection{The Outer Limit as a Strict Subset}
Next, we keep $\epsilon_n=0$, but consider the value of $\outerlim_{n\to\infty} M_n$ in a multi-dimensional setting, i.e., $s\in\N$, as this yields a potentially surprising result:
By \autoref{lmm:products}, $M$ is just the Cartesian product of the median sets in each dimension, i.e., $M = [0,1]^s$ (this is not to be confused with the geometric median, which is the Fréchet mean with respect to the square root of the Euclidean norm). Similarly, $M_n = \bigtimes_{k=1}^s M_n^k$ decomposes into the sample Fréchet mean sets $M_n^k$ of each dimension $k=1,\dots,s$. It holds $M_n^k = [0,1]$ if and only if the respective value of $p_n^k := \frac1n \sum_{i=1}^n Y_i^k$ is equal to $\frac12$, i.e., if and only if the symmetric simple random walk $S_n^k := \sum_{i=1}^n (2Y_i^k-1)$ hits $0$. Let $N = \#\Set{n\in\N\given S_n^1=\dots=S_n^s=0}$. Let $A\subset \R$ and $B_n\subset \R$ for all $n\in\N$. If $A\subset B_n$ for infinitely many $n$, then $A \subset \outerlim_{n\to\infty} B_n$. Thus, we want to know whether $N$ is finite or infinite. This is answered by \textit{Pólya's Recurrence Theorem} \cite{polya21}. It implies that for $s \in \{1, 2\}$, $N = \infty$ almost surely. Furthermore, if $s\geq 3$, then $N < \infty$ almost surely.
To find which points are not element of the outer limit of sample Fréchet mean sets, note following fact: For an open subset $A\subset \R$, if $A \subset \R \setminus B_n$ for all but finitely many $n$, then $A \subset \R \setminus \outerlim_{n\to\infty} B_n$.
We conclude, that in general a vector $x \in [0,1]^s$ is an element of $\outerlim_{n\to\infty} M_n$ if and only if at most two entries are not in $\{0,1\}$, i.e., almost surely
\begin{equation*}
	\outerlim_{n\to\infty} M_n = \Set{(x_1,\dots,x_s) \in [0,1]^s \given \#\{k\in\{1,\dots,s\} | x_k \in (0,1)\} \leq 2}
	\eqfs
\end{equation*}
Thus, $\outerlim_{n\to\infty} M_n = M$ for $s\in\{1,2\}$ and $\outerlim_{n\to\infty} M_n \subsetneq M$ for $s \geq 3$.

\subsection{Convergence in Hausdorff Distance}
Lastly, we use the setting $s=1$ and $\epsilon_n\in[0,\infty)$, where we want to find $\epsilon_n$ such that $[0,1] \subset M_n$.
At least one of $0$ and $1$ is a minimizer of $F_n(q)=p_n\abs{1-q}+(1-p_n)\abs{q}$ and the $F_n(q)$ is linear on $[0,1]$. Thus, $[0,1] \subset M_n$ if and only if  $\epsilon_n \geq |F_n(0)-F_n(1)|$. This is equivalent to 
 $|p_n-\frac12| < \frac12\epsilon_n$.
By Markov's inequality
\begin{equation*}
	\PrOf{\abs{p_n-\frac12} \geq \frac12\epsilon_n} \leq \frac{n^{-3} \Ex*{(Y-\frac12)^4}}{2^{-4}\epsilon_n^4}
	\eqfs
\end{equation*}
For $\epsilon_n = n^{-\frac14}$, we obtain
\begin{equation*}
	\sum_{n=1}^\infty \PrOf{\abs{p_n-\frac12} \geq \frac12\epsilon_n}  \leq \sum_{n=1}^\infty n^{-2} < \infty
	\eqfs
\end{equation*}
The Borel--Cantelli lemma implies that almost surely and for all $n$ large enough,  $|p_n-\frac12| < \frac12\epsilon_n$ and thus, $[0,1] \subset M_n$. 
Together with \autoref{cor:median}, we obtain $d_{\Hausdorff}(M_n, M) \xrightarrow{n\to\infty}_{\ms{a.s.}} 0$.
\section{Alternative Route to One-Sided Hausdorff Convergence}\label{sec:alt}
In this section, we show an alternative proof of a strong law of large numbers for generalized Fréchet mean sets in one-sided Hausdorff distance. Although the final result, \autoref{thm:alt:consistency}, is weaker than \autoref{thm:consistency}, it is very illustrative to follow this line of proof:
In contrast to the arguments in the main part of the article, it does not rely on the powerful result \cite[Theorem 1.1]{korf01}, which seems to be rather complex to prove. Instead our reasoning here is simpler and more self-contained.
Furthermore, a comparison between convergence in outer limit and in one-sided Hausdorff distance seems more natural in view of the deterministic results \autoref{thm:convOfMini} and \autoref{thm:alt:convOfMini}, and the stochastic results \autoref{thm:epi} and \autoref{thm:alt:consistency}.
\subsection{Convergence of Minimizer Sets of Deterministic Functions}\label{ssec:alt:det}
Let $(\mc Q, d)$ be a metric space. For $A\subset \mc Q$ and $\delta>0$, denote $\ball_\delta(A) = \bigcup_{x\in A} \ball_\delta(x)$. 
\begin{definition} \mbox{ }
	\begin{enumerate}[label=(\roman*)]
	\item 
		Let $f,f_n \colon \MS\to\R$, $n\in\N$.
		The sequence $(f_n)_{n\in\N}$  \emph{converges} to $f$ \emph{uniformly on bounded sets} if and only if for every $B\subset \MS$ with $\diam(B)<\infty$,
		\begin{equation*}
		\lim_{n\to\infty} \sup_{x\in B}\abs{f_n(x) - f(x)} =0
		\eqfs
		\end{equation*}
		We then write $f_n\ubsconv{n}f$.
	\item 
		A sequence $(B_n)_{n\in\N}$ of sets $B_n \subset \MS$ is called \emph{eventually bounded} if and only if
		\begin{equation*}
		\limsup_{n\to\infty} \diam\brOf{\bigcup_{k=n}^\infty B_k} < \infty
		\eqfs
		\end{equation*}
	\item 
		A function $f$ has \emph{approachable minimizers} if and only if for all $\epsilon >0$ there is a $\delta>0$ such that
		$\delta\text{-}\argmin f \subset B_\epsilon(\argmin f)$.
	\end{enumerate}	
\end{definition}
The last definition directly implies that $d_\subset(\delta\text{-}\argmin f, \argmin f) \xrightarrow{\delta\to0} 0$ is equivalent to $f$ having approachable minimizers. Furthermore, if $f$ has approachable minimizers, then $\argmin f \neq \emptyset$, as for every $\delta>0$ the set $\delta\text{-}\argmin f$ is non-empty, but $\ball_\epsilon(\emptyset)=\emptyset$.
\begin{theorem}\label{thm:alt:convOfMini}
	Let $f,f_n \colon \MS\to\R$. Let $(\epsilon_n)_{n\in\N}\subset [0,\infty)$ with $\epsilon_n \xrightarrow{n\to\infty}0$. 
	Assume $f$ has approachable minimizers, $f_n \xrightarrow{n\to\infty}_{\ubs} f$, and $(\epsilon_n\text{-}\argmin f_n)_{n\in\N}$ is eventually bounded.
	Then
	\begin{equation*}
		d_\subset(\epsilon_n\text{-}\argmin f_n, \argmin f)\xrightarrow{n\to\infty}
		0
	\end{equation*}
	and
	\begin{equation*}
		\inf f_n \xrightarrow{n\to\infty} \inf f
		\eqfs
	\end{equation*}
\end{theorem}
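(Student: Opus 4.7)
My plan is two-step: first I would prove $\inf f_n \to \inf f$ via a short sandwich argument, then deduce the one-sided Hausdorff statement via a subsequence contradiction that combines this with approachability.

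For the setup I would fix some $x^* \in \argmin f$ (non-empty, as noted after the definition of approachable minimizers) and, by eventual boundedness, pick $n_0$ and a bounded set $B \subset \mc Q$ containing both $x^*$ and $\bigcup_{k \geq n_0} \epsilon_k\text{-}\argmin f_k$. Uniform convergence on bounded sets then delivers $\eta_n := \sup_{x \in B} |f_n(x) - f(x)| \to 0$. The upper bound $\inf f_n \leq f_n(x^*) \leq \inf f + \eta_n$ is immediate. For the matching lower bound I would pick any $x_n \in \epsilon_n\text{-}\argmin f_n$ (eventually non-empty and contained in $B$) and chain
\[ \inf f_n \;\geq\; f_n(x_n) - \epsilon_n \;\geq\; f(x_n) - \eta_n - \epsilon_n \;\geq\; \inf f - \eta_n - \epsilon_n, \]
yielding $\inf f_n \to \inf f$.

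For the Hausdorff claim I would argue by contradiction. Suppose there exist $\epsilon > 0$ and a subsequence $x_{n_k} \in \epsilon_{n_k}\text{-}\argmin f_{n_k}$ with $d(x_{n_k}, \argmin f) \geq \epsilon$. Eventually $x_{n_k} \in B$, so using the convergence of infima and $\eta_n \to 0$,
\[ f(x_{n_k}) \;\leq\; f_{n_k}(x_{n_k}) + \eta_{n_k} \;\leq\; \inf f_{n_k} + \epsilon_{n_k} + \eta_{n_k} \;\xrightarrow{k\to\infty}\; \inf f. \]
Hence $x_{n_k}$ lies eventually in every $\delta\text{-}\argmin f$. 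Approachable minimizers applied to our $\epsilon$ produces $\delta > 0$ with $\delta\text{-}\argmin f \subset \ball_\epsilon(\argmin f)$, so $d(x_{n_k}, \argmin f) < \epsilon$ for large $k$, the desired contradiction.

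The only conceptually subtle step will be keeping the chosen sequence inside a common bounded set so that convergence uniformly on bounded sets can be invoked; that is precisely the role of the eventual boundedness hypothesis. Approachability then cleanly translates convergence of function values ($f(x_{n_k}) \to \inf f$) into spatial closeness to $\argmin f$, which is the true content of the theorem. Everything else is bookkeeping with three ingredients: the bound $\eta_n$, the defining inequality for $\epsilon_n\text{-}\argmin$, and the triangle-style estimate $|f_n - f| \leq \eta_n$ on $B$.
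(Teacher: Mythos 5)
Your proposal is correct and takes essentially the same approach as the paper: uniform convergence on a bounded set containing all the sets $\epsilon_n\text{-}\argmin f_n$ (together with a fixed minimizer of $f$), plus approachability to convert a small excess in $f$-value into membership in $\ball_\epsilon(\argmin f)$. The differences are only organizational — you establish $\inf f_n \to \inf f$ first and then obtain the one-sided Hausdorff claim by a subsequence contradiction, while the paper proves the set inclusion directly via a $3\delta$ bookkeeping and treats the infima afterwards by contradiction — and, like the paper's own argument, you implicitly use that $\epsilon_n\text{-}\argmin f_n$ is eventually non-empty when selecting the points $x_n$.
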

\begin{proof}
		Let $\epsilon >0$. As $f$ has approachable minimizers, there is $\delta > 0$ such that $(3\delta)\text{-}\argmin f \subset \ball_\epsilon(\argmin f)$; also $\argmin f \neq \emptyset$. Let $y\in\argmin f$. As $f_n(y) \xrightarrow{n\to\infty} f(y)$, there is $n_1\in\N$ such that $\inf f_n \leq \inf f + \delta$ for all $n \geq n_1$.		
		As $\epsilon_n \xrightarrow{n\to\infty} 0$, there is $n_2\in\N$ such that $\epsilon_n \leq \delta$ for all $n \geq n_2$.
		As $(\epsilon_n\text{-}\argmin f_n)_{n\in\N}$ is eventually bounded, there is $n_3\in\N$ such that $\diam(B) < \infty$ for $B = \bigcup_{n\geq n_3} \epsilon_n\text{-}\argmin f_n$.
		As $f_n \xrightarrow{n\to\infty}_{\ubs} f$ there is $n_4$ such that $\sup_{x\in B}\abs{f_n(x) - f(x)} \leq \delta$.
		Let $n \geq \max(n_1, n_2, n_3, n_4)$ and $x \in \epsilon_n\text{-}\argmin f_n$. Then
		\begin{equation*}
			f(x) \leq f_n(x) + \delta \leq \inf f_n + 2\delta \leq \inf f + 3\delta\eqfs
		\end{equation*}
		Thus, $x \in (3\delta)\text{-}\argmin f$. By the choice of $\epsilon$ and $\delta$, we obtain $\epsilon_n\text{-}\argmin f_n \subset \ball_\epsilon(\argmin f)$ or equivalently $d_{\subset}(\epsilon_n\text{-}\argmin f_n, \argmin f) \leq \epsilon$.
		
		Finally, we show the convergence of the infima. We already know $\inf f_n \leq \inf f + \epsilon$ for all $\epsilon > 0$ and $n$ large enough.
		If $\inf f_n \xrightarrow{n\to\infty} \inf f$ does not hold, there is a sequence $x_n \in \epsilon_n\text{-}\argmin f_n$ and $\epsilon > 0$ such that $f_n(x_n) < \inf f - \epsilon$ for all $n$ large enough. As before, because of eventual boundedness and uniform convergence on bounded sets, we have $\sup_{k\in\N}\abs{f_n(x_k) - f(x_k)} \xrightarrow{n\to\infty} 0$. Therefore, for all $\epsilon >0$ we have $f(x_n) \leq f_n(x_n) + \epsilon$ for $n$ large enough, which contradicts $f_n(x_n) < \inf f - \epsilon$.
\end{proof}
In the following, we construct examples to show that none of the conditions for one-sided Hausdorff convergence can be dropped.
\begin{example}\label{exa:alt:necessary}
\mbox{ }
\begin{enumerate}[label=(\roman*)]
	\item 
	Let $f,f_n\colon \N_0 \to \R$, $f_n = 1-\indOf{\cb{0,n}}$, $f = 1-\indOf{\cb{0}}$, $d(i,j) = 1$ for $i\neq j$.
	It holds that $f$ is continuous and has approachable minimizers, and the sequence of nonempty sets $\argmin f_n = \cb{0,n}$ is eventually bounded, as $\diam(A)\leq1$ for every $A\subset\N_0$. Furthermore, $f_n$ converges to $f$ uniformly on compact sets, which are exactly the finite subsets of $\N_0$, but not uniformly on bounded sets like $\N_0$ itself.
	There is a subsequence of minimizers $x_n=n\in \argmin f_n$ that is always bounded away from $0$, the minimizer of $f$.
	This shows that uniform convergence on compact sets (instead of bounded sets) is not enough.
	\item
	As above, let $f, f_n\colon \N_0 \to \R$, $f_n = 1-\indOf{\cb{0,n}}$, $f = 1-\indOf{\cb{0}}$, but define $d(i,j) = |i-j|$.
	It holds that $f$ is continuous and has approachable minimizers, and $f_n\ubsconv{n}f$, but the sequence of nonempty sets $\argmin f_n = \cb{0,n}$ is not eventually bounded. 
	Again, there is a subsequence of minimizers $x_n=n\in \argmin f_n$ that is always bounded away from $0$, the minimizer of $f$.
	This shows that eventual boundedness of minimizer sets cannot be dropped.
	\item
	Let $f,f_n\colon \N_0 \to \R$, $f(0) = 0$, $f(i) = \frac1i$, $f_n(i) = f(i)\indOfEvent{i < n}$, and set $d(i,j) = 1$  for $i\neq j$.
	It holds that $f$ is continuous, but $f$ does not have approachable minimizers. The sequence of nonempty sets $\argmin f_n = \cb{0,n, n+1, \dots}$ is eventually bounded and $f_n\ubsconv{n}f$.
	There is a subsequence of minimizers $x_n=n\in \argmin f_n$ that is always bounded away from $0$, the minimizer of $f$.
	This shows that approachability of minimizers of $f$ cannot be dropped.
\end{enumerate}
\end{example}
\subsection{Strong Laws for $\mf c$-Fréchet Mean Sets}\label{ssec:alt:gen}
Let $(\mc Q, d)$ be a metric space, the descriptor space. Let $\mc Y$ be a set, the data space. Let $\mf c\colon \mc Y \times\mc Q \to \R$ be a function, the cost function. Let $(\Omega, \Sigma, \Pr)$ be a probability space that is silently underlying all random variables in this section. Let $Y$ be a random variable with values in $\mc Y$. Denote the $\mf c$-Fréchet mean set of $Y$ as $M = \argmin_{q\in\mc Q} \Ex{\mf c(Y, q)}$.
Let $Y_1, \dots, Y_n$ be independent random variables with the same distribution as $Y$.
Choose $(\epsilon_n)_{n\in\N}\subset [0,\infty)$ with $\epsilon_n \xrightarrow{n\to\infty}0$. Set $M_n = \epsilon_n\text{-}\argmin_{q\in\mc Q} \frac1n \sum_{i=1}^n \mf c(Y_i, q)$.
\begin{assumptions}\mbox{ }
\begin{itemize}
	\item \textsc{Continuity}:  The function $q \mapsto \mf c(Y, q)$ is continuous almost surely.
	\end{itemize}
\end{assumptions}
\begin{theorem}\label{thm:alt:consistency}
	Assume \textsc{HeineBorel}, \textsc{Continuity}, \textsc{UpperBound}, and \textsc{LowerBound}. Then
	\begin{equation*}
		d_\subset(M_n, M) \xrightarrow{n\to\infty}_{\ms{a.s.}} 0
		\eqfs
	\end{equation*}
\end{theorem}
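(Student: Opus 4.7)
The strategy is to verify the three hypotheses of the deterministic result \autoref{thm:alt:convOfMini}: that $F_n \ubsconv{n} F$ almost surely, where $F(q) := \Ex{\mf c(Y,q)}$ and $F_n(q) := \frac1n\sum_{i=1}^n \mf c(Y_i,q)$; that $F$ has approachable minimizers; and that $(M_n)_{n\in\N}$ is eventually bounded almost surely. Once these are in place, \autoref{thm:alt:convOfMini} immediately delivers the claim.

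Eventual boundedness of $M_n$ can be established almost verbatim as in Step 2 of the proof of \autoref{thm:consistency}. Fix any $m \in M$ (nonemptiness of $M$ will be handled below). By \textsc{UpperBound}, $F(m) < \infty$ and $F_n(m) \xrightarrow{n\to\infty}_{\ms{a.s.}} F(m)$ by the ordinary SLLN. By \textsc{LowerBound}, the ratio $\psi^+/\psi^- \to \infty$ forces both $F(q)$ and, eventually, $F_n(q)$ to exceed $F(m)+2$ and $F_n(m)+1$ respectively outside some ball $B_1 = \ball_\delta(o)$, using $\mf a_n^\pm \to \mf a^\pm$ a.s. Since $\epsilon_n \to 0$, we conclude that almost surely $M_n \subset B_1$ for all large $n$, giving eventual boundedness.

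For approachability of minimizers of $F$, note first that \textsc{Continuity} combined with the envelope condition \textsc{UpperBound} (apply dominated convergence on any bounded neighborhood) implies $F$ is continuous. \textsc{LowerBound} yields coercivity of $F$, so its sublevel sets are bounded, and by \textsc{HeineBorel} (which upgrades closed and bounded to compact) they are compact. In particular $\argmin F$ is nonempty and compact, and the family of sublevel sets $\{F \leq \inf F + \delta\}$ forms a decreasing family of nonempty compacta whose intersection is $\argmin F$. A standard compact nested-set argument then gives $d_\subset(\delta\text{-}\argmin F, \argmin F) \to 0$ as $\delta \to 0$, i.e.\ approachability.

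The main obstacle is the uniform strong law on bounded sets. Fix $B \subset \mc Q$ with $\diam(B) < \infty$; by \textsc{HeineBorel}, $K := \overline{B}$ is compact. Define the local oscillators
\begin{equation*}
U_\rho(y, q) := \sup_{p\in\ball_\rho(q)\cap K} \mf c(y,p), \qquad L_\rho(y, q) := \inf_{p\in\ball_\rho(q)\cap K} \mf c(y,p)\eqfs
\end{equation*}
Both are bounded in absolute value by the envelope $\sup_{p\in K}|\mf c(y,p)|$, which is integrable by \textsc{UpperBound}. By \textsc{Continuity}, $U_\rho(Y,q) \downarrow \mf c(Y,q)$ and $L_\rho(Y,q) \uparrow \mf c(Y,q)$ almost surely as $\rho\downarrow 0$, so dominated convergence gives $\Ex{U_\rho(Y,q)}\to F(q)$ and $\Ex{L_\rho(Y,q)}\to F(q)$. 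Given $\varepsilon>0$, pick $\rho>0$ and, by compactness of $K$, a finite cover $K\subset\bigcup_{k=1}^N \ball_\rho(q_k)$ such that $\Ex{U_\rho(Y,q_k)-L_\rho(Y,q_k)} < \varepsilon$ for each $k$. The one-dimensional SLLN applied to each of the finitely many random variables $U_\rho(Y_i,q_k)$ and $L_\rho(Y_i,q_k)$ then sandwiches $F_n(q)$ between averages converging a.s.\ to quantities within $\varepsilon$ of $F(q)$, uniformly in $q \in K$. Since $\varepsilon$ is arbitrary, $\sup_{q\in K}|F_n(q)-F(q)| \to 0$ a.s., which is exactly $F_n\ubsconv{n}F$ on $B$. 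Combining the three ingredients with \autoref{thm:alt:convOfMini} completes the proof.
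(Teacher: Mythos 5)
Your proposal is correct and takes essentially the same route as the paper: the paper's proof likewise verifies the three hypotheses of \autoref{thm:alt:convOfMini} — uniform convergence on bounded sets (via its uniform law of large numbers, \autoref{thm:ULLN}, whose proof uses the same compact finite-cover, integrable-envelope, dominated-convergence device as your bracketing with $U_\rho$ and $L_\rho$), approachable minimizers of $F$ via \textsc{HeineBorel} compactness, and eventual boundedness of $M_n$ exactly as in Step 2 of \autoref{thm:consistency}. The only cosmetic differences are that you re-derive the ULLN inline rather than citing \autoref{thm:ULLN}; in doing so you should pick the radius per center (then extract the finite subcover) rather than one $\rho$ for all $q_k$, and pass to the countable family $\ball_\delta(o)$, $\delta\in\N$, to get a single null set serving all bounded sets — both standard repairs that do not affect the argument.
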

\begin{proof}
Define $F(q) = \Ex{\mf c(Y, q)}$, $F_n(q) = \frac1n\sum_{i=1}^n \mf c(Y_i, q)$.  The proof consists of following steps:
\begin{enumerate}
\item Show that $F_n \xrightarrow{n\to\infty}_{\ubs} F$ almost surely.
\item Reduction to a bounded set.
\item Show that $F$ has approachable minimizers.
\item Show that $M_n$ is eventually bounded.
\item Apply \autoref{thm:alt:convOfMini}.
\end{enumerate}
\underline{\smash{Step 1.}}
To show uniform convergence on bounded sets, we will use the uniform law of large numbers, \autoref{thm:ULLN} below.
Let $B\subset\mc Q$ be a bounded set.
By \textsc{HeineBorel}, $\overline{B}$ is compact.
By \textsc{Continuity}, $q \mapsto \mf c(Y, q)$ is almost surely continuous.
By \textsc{UpperBound}, $\Ex{ \sup_{q\in B}\abs{\mf c(Y, q)}} < \infty$. Thus, \autoref{thm:ULLN} implies that $q \mapsto F(q)$ is continuous and
\begin{equation*}
	\sup_{q\in B}  \abs{F_n(q)-F(q)} \xrightarrow{n\to\infty}_{\ms{a.s.}}0
	\eqfs
\end{equation*}
Fix an arbitrary element $o\in\mc Q$. For all bounded sets $B$, there is $\delta \in\N$ such that $B \subset \ball_\delta(o)$. By the previous considerations, uniform convergence holds almost surely for all $(\ball_{\delta}(o))_{\delta\in\N}$.
Thus, $F_n \xrightarrow{n\to\infty}_{\ubs} F$ almost surely.

\noindent
\underline{\smash{Step 2.}}
Find $B_1\subset\mc Q$ and a random variable $N_1\in\N$ as in step 2 in the proof of \autoref{thm:consistency}.

\noindent
\underline{\smash{Step 3.}}
Clearly, $M \subset B_1$ is bounded. Furthermore, for all $\epsilon > 0$ small enough the set $D_\epsilon = \overline{B_1 \setminus \ball_\epsilon(M)}$ is not empty (if it is, increase $\delta$), does not contain any element of $M$ and, by \textsc{HeineBorel}, is compact. Thus, the continuous function $q\mapsto F(q)$ attains its infimum on $D_\epsilon$ where $\inf_{q\in D_\epsilon} F(q) > \inf_{q\in \mc Q} F(q)$. Take $\zeta = \min(1, \frac12(\inf_{q\in D_\epsilon} F(q) - \inf_{q\in \mc Q} F(q)))$. Then $\zeta\text{-}\argmin_{q\in\mc Q} F(q) \subset \ball_\epsilon(M)$, i.e., $F$ has approachable minimizers. 

\noindent
\underline{\smash{Step 4.}}
For $\epsilon_n < 1$ and $n \geq N_1$, it holds $M_n \subset B_1$. Thus, $(M_n)_{n\in\N}$ is eventually bounded almost surely.

\noindent
\underline{\smash{Step 5.}}
Finally, \autoref{thm:alt:convOfMini} implies $d_\subset(M_n, M) \xrightarrow{n\to\infty}_{\ms{a.s.}} 0$.
\end{proof}
\section{Auxiliary Results}\label{sec:aux}
There are many versions of uniform laws of large numbers in the literature. We state and prove one version that is tailored to our needs.
\begin{theorem}\label{thm:ULLN}
Let $(\mc Y, \Sigma_{\mc Y})$ be a measurable space and $Y$ be a random variable with values in $\mc Y$. Let $Y_1, \dots, Y_n$ be independent and have the same distribution as $Y$.
Let $(\mc Q, d)$ be a metric space and $B\subset \mc Q$ compact. 
Let $f \colon \mc Y \times B \to \R$ be such that $q \mapsto f(Y, q)$  is almost surely continuous. Assume there is a random variable $Z$ such that $\abs{f(Y, q)} \leq Z$ for all $q \in B$ with $\Ex{Z} < \infty$. Then $q \mapsto \Ex{f(Y, q)}$ is continuous and 
\begin{equation*}
	\sup_{q\in B} \abs{\frac 1n \sum_{i=1}^n f(Y_i, q) - \Ex{f(Y, q)}} \xrightarrow{n\to\infty}_{\ms{a.s.}} 0 
	\eqfs
\end{equation*}
\end{theorem}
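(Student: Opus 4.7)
The plan is the standard bracketing/covering argument: combine the pointwise strong law at finitely many points with a uniform control of the modulus of continuity of $q\mapsto f(Y,q)$. First I would establish the continuity of the limit $F(q):=\Ex{f(Y,q)}$. If $q_n\to q$ in $B$, then by \textsc{Continuity} $f(Y,q_n)\to f(Y,q)$ almost surely, and the inequality $\abs{f(Y,q_n)}\leq Z$ with $\Ex{Z}<\infty$ allows me to invoke dominated convergence and conclude $F(q_n)\to F(q)$.

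Next I would introduce the (random) modulus of continuity
\begin{equation*}
    \omega(Y,\delta) := \sup\Set{\abs{f(Y,q)-f(Y,p)} \given q,p\in B,\ d(q,p)<\delta}\eqfs
\end{equation*}
Since $B$ is compact and hence separable, the supremum can be taken over a countable dense subset, which makes $\omega(Y,\delta)$ measurable. On the almost sure event where $q\mapsto f(Y,q)$ is continuous, it is uniformly continuous on the compact set $B$, so $\omega(Y,\delta)\to 0$ as $\delta\to 0$ almost surely. Because $\omega(Y,\delta)\leq 2Z$ and $\Ex{Z}<\infty$, dominated convergence gives $\Ex{\omega(Y,\delta)}\to 0$ as $\delta\to 0$.

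For the uniform convergence, fix $\epsilon>0$ and choose $\delta>0$ with $\Ex{\omega(Y,\delta)}<\epsilon/3$. By compactness cover $B$ with finitely many balls $\ball_\delta(q_1),\dots,\ball_\delta(q_N)$. For any $q\in\ball_\delta(q_j)$,
\begin{equation*}
    \abs{F_n(q)-F(q)} \leq \tfrac1n\sum_{i=1}^n \omega(Y_i,\delta) + \max_{j\leq N}\abs{F_n(q_j)-F(q_j)} + \Ex{\omega(Y,\delta)}\eqcm
\end{equation*}
where $F_n(q)=\tfrac1n\sum_{i=1}^n f(Y_i,q)$. By the (one-dimensional) strong law applied to the integrable variable $\omega(Y,\delta)$ and to each of the finitely many $f(Y,q_j)$, the first term converges almost surely to $\Ex{\omega(Y,\delta)}<\epsilon/3$ and the second to $0$. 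Hence almost surely $\limsup_n \sup_{q\in B}\abs{F_n(q)-F(q)}\leq 2\Ex{\omega(Y,\delta)}<\epsilon$. Intersecting the resulting probability-one sets over a sequence $\epsilon_k\downarrow 0$ yields the desired uniform almost sure convergence.

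The main technical obstacle is measurability of $\omega(Y,\delta)$ and the suprema of $F_n-F$: this is handled by separability of $B$, which lets us reduce every supremum to one over a countable dense subset and apply the usual machinery. Everything else is an application of the classical scalar SLLN and dominated convergence.
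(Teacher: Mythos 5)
Your proposal is correct and follows essentially the same route as the paper's proof: a finite $\epsilon$-net of the compact set $B$, the scalar strong law at the finitely many centers, and the strong law applied to the (integrable, by domination with $2Z$) modulus of continuity, whose expectation vanishes as $\delta\to0$ by uniform continuity on $B$ and dominated convergence. The only cosmetic differences are that you prove continuity of $q\mapsto\Ex{f(Y,q)}$ directly by dominated convergence and explicitly discuss measurability of the suprema via separability, whereas the paper obtains continuity from the bound $a_{\epsilon}$ and leaves measurability implicit.
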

\begin{proof}
Let $\epsilon > 0$. As $B$ is compact, there is a finite set $\cb{q_1, \dots, q_k} \subset \mc Q$ such that $B \subset \bigcup_{\ell=1}^k\ball_\epsilon(q_\ell)$. We split the supremum,
\begin{align*}
	&\sup_{q\in B} \abs{\frac 1n \sum_{i=1}^n f(Y_i, q) - \Ex{f(Y, q)}}
	\\&\leq
	\sup_{\ell \in \cb{1,\dots, k}} 
	\sup_{q\in \ball_\epsilon(q_\ell)}  
		\abs{\frac 1n \sum_{i=1}^n \br{f(Y_i, q) - f(Y_i, q_\ell)} - \Ex{f(Y, q) - f(Y, q_\ell)}} 
	\\&\quad+ 
	\sup_{\ell \in \cb{1,\dots, k}} \abs{\frac 1n \sum_{i=1}^n f(Y_i, q_\ell) - \Ex{f(Y, q_\ell)}}
	\eqfs
\end{align*}
For the second summand, by the standard strong law of large numbers applied to each $\ell\in\{1,\dots,k\}$ with $\Ex{Z} < \infty$, 
\begin{align*}
\sup_{\ell \in \cb{1,\dots, k}} \abs{\frac 1n \sum_{i=1}^n f(Y_i, q_\ell) - \Ex{f(Y, q_\ell)}} \xrightarrow{n \to \infty}_{\ms{a.s.}} 0
\eqfs
\end{align*}
For the first summand,
\begin{align*}
	&\sup_{\ell \in \cb{1,\dots, k}} 
	\sup_{q\in \ball_\epsilon(q_\ell)}  
		\abs{\frac 1n \sum_{i=1}^n \br{f(Y_i, q) - f(Y_i, q_\ell)} - \Ex{f(Y, q) - f(Y, q_\ell)}} 
	\\& \leq
	\frac 1n \sum_{i=1}^n \sup_{q,p\in B,\, \sol qp \leq \epsilon} \abs{f(Y_i, q) - f(Y_i, p)} 
	+ 
	\Ex*{\sup_{q,p\in B,\, \sol qp \leq \epsilon} \abs{f(Y, q) - f(Y, p)}} 
	\eqfs
\end{align*}
By the standard strong law of large numbers with $\Ex{Z} < \infty$, 
\begin{align*}
	\frac 1n \sum_{i=1}^n \sup_{q,p\in B,\, \sol qp \leq \epsilon} \abs{f(Y_i, q) - f(Y_i, p)} 
	&\xrightarrow{n \to \infty}_{\ms{a.s.}} 
	\Ex*{\sup_{q,p\in B,\, \sol qp \leq \epsilon} \abs{f(Y, q) - f(Y, p)} }
	\eqfs
\end{align*}
Thus,
\begin{equation}\label{eq:ulln:as_limsup}
	\PrOf{
		\limsup_{n\to\infty} \sup_{q\in B} \abs{\frac 1n \sum_{i=1}^n f(Y_i, q) - \Ex{f(Y, q)}}
		\leq
		a_\epsilon
	} = 1
	\eqcm
\end{equation}
where $a_\epsilon = 2 \Ex*{\sup_{q,p\in B,\, \sol qp \leq \epsilon} \abs{f(Y, q) - f(Y, p)} }$.
As $q \mapsto f(Y, q)$ is almost surely continuous and $B$ is compact, $q \mapsto f(Y, q)$ is almost surely uniformly continuous, i.e., for all $\delta > 0$ there is $\varepsilon(\delta, Y) > 0$ such that $\abs{f(Y,q)-f(Y,p)} \leq \delta$ for all $\ol qp \leq \varepsilon(\delta, Y)$.
As $\Ex{Z} < \infty$, we can use dominated convergence to obtain 
\begin{align*}
	\lim_{\epsilon\searrow0} 	\Ex*{\sup_{q,p\in B,\, \sol qp \leq \epsilon} \abs{f(Y, p) - f(Y, p)} }
	&=
	\Ex*{\lim_{\epsilon\searrow0}\sup_{q,p\in B,\, \sol qp \leq \epsilon} \abs{f(Y, p) - f(Y, p)} }
	=
	0
	\eqfs
\end{align*}
Thus, $a_\epsilon \xrightarrow{\epsilon\searrow0} 0$. Together with \eqref{eq:ulln:as_limsup}, this implies
\begin{equation*}
 \sup_{q\in B} \abs{\frac 1n \sum_{i=1}^n f(Y_i, q) - \Ex{f(Y, q)}} \xrightarrow{n \to \infty}_{\ms{a.s.}} 0
 \eqfs
\end{equation*}
We have also shown that $q\mapsto \Ex{f(Y, q)}$ is continuous, as $\abs{\Ex{f(Y, q)}-\Ex{f(Y, p)}} \leq a_{\sol qp}$.
\end{proof}
\begin{lemma}\label{lmm:nondec}
Let $h \colon [0,\infty) \to [0,\infty)$ be a non-decreasing function.
Define $H \colon [0,\infty) \to [0,\infty), x\mapsto \int_0^x h(t) \dl t$.
Let $x, y \geq 0$. Then
\begin{enumerate}[label=(\roman*)]
\item $\abs{H(x) - H(y)} \leq \abs{x-y}h(\max(x, y))$.
\end{enumerate}
Assume, there is $b \in [1,\infty)$ such that $h(2u) \leq b h(u)$ for all $u \geq 0$. Then
\begin{enumerate}[label=(\roman*),start=2]
\item $\frac12 h(x) + \frac12 h(y) \leq h (x+y) \leq b \br{h(x) + h(y)}$,
\item $H(\abs{x-y}) - H(x) \geq b^{-1} H(y) - 2 y h(x)$.
\end{enumerate}
\end{lemma}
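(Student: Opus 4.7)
The plan is to dispatch (i) and (ii) immediately from the definitions and then use (ii) as the main input to (iii); this is a direct computation rather than a deep result.

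For (i), by symmetry I may assume $x \leq y$, so $H(y)-H(x) = \int_x^y h(t)\,\dl t \leq (y-x)\,h(y)$ since $h$ is non-decreasing on $[x,y]$, with its maximum being $h(y)=h(\max(x,y))$. For the lower inequality in (ii), I use $h(x+y) \geq h(\max(x,y)) \geq \frac{1}{2}\bigl(h(x)+h(y)\bigr)$, the first step by monotonicity (since $x,y\geq 0$) and the second because the max dominates the average. For the upper inequality in (ii), I again take WLOG $x \leq y$, so that $x+y \leq 2y$, and the doubling hypothesis gives $h(x+y) \leq h(2y) \leq b\,h(y) \leq b\bigl(h(x)+h(y)\bigr)$.

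For (iii), I would split on the sign of $y-x$. If $y \leq x$ then $|x-y|=x-y$, and
\begin{equation*}
H(x-y)-H(x) \;=\; -\!\int_{x-y}^{x} h(t)\,\dl t \;\geq\; -y\,h(x)
\end{equation*}
because $h(t)\leq h(x)$ on the interval. The claim then reduces to $y\,h(x) \geq b^{-1}H(y)$, which holds because $H(y)\leq y\,h(y) \leq y\,h(x)$ and $b\geq 1$.

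The only nontrivial case is $y>x$. Here I would substitute $s=t-x$ in $H(y)-H(x)=\int_x^y h(t)\,\dl t$ and apply the upper half of (ii) to $h(s+x)$:
\begin{equation*}
H(y)-H(x) \;=\; \int_0^{y-x} h(s+x)\,\dl s \;\leq\; b\!\int_0^{y-x}\!\bigl(h(s)+h(x)\bigr)\dl s \;=\; b\,H(y-x) + b(y-x)\,h(x).
\end{equation*}
Dividing by $b$ and subtracting $H(x)$ yields
\begin{equation*}
H(y-x)-H(x) \;\geq\; b^{-1}H(y) - (1+b^{-1})H(x) - (y-x)\,h(x).
\end{equation*}
Using $b\geq 1$ (so $1+b^{-1}\leq 2$), $H(x)\leq x\,h(x)$, and $x\leq y$, the error terms bound as $(1+b^{-1})H(x)+(y-x)h(x) \leq 2x\,h(x) + (y-x)\,h(x) = (x+y)\,h(x) \leq 2y\,h(x)$, which finishes (iii). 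The only real bit of bookkeeping is ensuring that the factor $b^{-1}$ and the slack $(y-x)h(x)$ are absorbed into the $2y\,h(x)$ on the right; I do not anticipate any genuine obstacle.
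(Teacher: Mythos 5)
Your proof is correct. Parts (i) and (ii) follow the same elementary reasoning as the paper (the paper invokes the mean value theorem for (i); your direct bound $\int_x^y h(t)\,\dl t \leq (y-x)h(y)$ is the same estimate), but for (iii) you take a genuinely different route. The paper fixes $x$, regards the defect $H(\abs{x-y})-H(x)-b^{-1}H(y)+2yh(x)$ as a function of $y$ on each of the two ranges, shows its $y$-derivative is nonnegative using the lower bound of (ii), and then checks the boundary value at $y=0$ (resp.\ $y=x$, where $H(x)\leq xh(x)$ and $1+b^{-1}\leq 2$ finish the job). You instead split on $y\leq x$ versus $y>x$ and estimate the integrals directly: in the first case the bound $H(x-y)-H(x)\geq -y\,h(x)$ plus $b^{-1}H(y)\leq yh(y)\leq yh(x)$ suffices, and in the second case you apply the doubling inequality pointwise under the integral, $h(s+x)\leq b\bigl(h(s)+h(x)\bigr)$, to get $H(y)-H(x)\leq b\,H(y-x)+b(y-x)h(x)$, then absorb the error with the same two facts ($H(x)\leq xh(x)$, $1+b^{-1}\leq 2$) the paper uses at its boundary step. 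Your version is slightly more robust in that it never differentiates $H$ (which is only differentiable where $h$ is continuous, so the paper's derivative argument implicitly leans on absolute continuity of $H$), while the paper's version makes the monotonicity-in-$y$ structure of the inequality more transparent; both hinge on (ii) as the key input.
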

\begin{proof}
\begin{enumerate}[label=(\roman*)]
\item This is a direct consequence of the mean value theorem.
\item As $h$ is non-decreasing, $\max(h(x), h(y)) \leq h(x+y) \leq \max(h(2x), h(2y))$.
	By the definition of $b$ and with $\frac12(u+v) \leq \max(u,v) \leq u+v$ for $u,v \geq 0$ the claim follows.
\item 
	First, consider the case $x\geq y$.
 	Define $f(x,y) = H(x-y)-H(x)- b^{-1} H(y) + 2 y h(x)$.
 	We want to show $f(x, y)\geq0$.
	The derivative of $f$ with respect to $y$ is 
	\begin{equation*}
		\partial_y f(x, y) = -h(x-y) - b^{-1}h(y) + 2 h(x)\eqfs
	\end{equation*} 
	By applying the first inequality of (ii) to $h(x) = h((x-y) + y)$, we obtain $\partial_y f(x, y)  \geq 0$ as $b^{-1} \leq 1$.
	Hence, $f(x, y) \geq f(x, 0) = 0$, as $H(y) = 0$.
	
	Now, consider the case $x \leq y$. Set $g(x,y) = H(y-x) - H(x) - b^{-1} H(y) + 2 y h(x)$, which yields
	\begin{equation*}
		\partial_y g(x, y) = h(y-x) - b^{-1} h(y) + 2 h(x)\eqfs
	\end{equation*} 
	By applying the second inequality of (ii) to $h(y) = h((y-x) + x)$, we obtain $\partial_y g(x, y)  \geq 0$ as $b^{-1} \leq 1$.
	Thus, $g(x, y) \geq g(x, x) = -(1 + b^{-1}) H(x) + 2 x h(x)$ as $H(0) = 0$.
	By the definition of $H$, as $h$ is non-decreasing, $H(x) \leq x h(x)$. Hence, $g(x, y) \geq 0$ as $1 + b^{-1} \leq 2$.

	Together, we have shown $H(\abs{x-y}) - H(x) - b^{-1} H(y) + 2 y h(x) \geq 0$ for all $x,y\geq0$.
\end{enumerate}
\end{proof}
\begin{lemma}\label{lmm:nondec:existence}
	Let $X$ be a random variable with values in $[0,\infty)$. Then there is a strictly increasing, continuous, and concave function $h \colon [0,\infty) \to [0,\infty)$ with $h(\delta)\xrightarrow{\delta\to\infty}\infty$ such that $\Ex{h(X)} < \infty$.
\end{lemma}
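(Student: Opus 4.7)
The plan is to construct $h$ explicitly as a continuous, piecewise linear interpolation through a sequence of knots $(x_n, n)_{n \geq 0}$. The only information about $X$ that will be used is that $\Pr(X \geq x) \to 0$ as $x \to \infty$, which holds for any $[0,\infty)$-valued random variable; in particular no moment assumption on $X$ is required. The key idea is to trade off growth of $h$ against tail decay of $X$: by taking the gaps $x_n - x_{n-1}$ wide enough, the piecewise-linear slopes $1/(x_n - x_{n-1})$ become non-increasing (yielding concavity), while by taking $x_n$ itself large enough, the tail $\Pr(X \geq x_n)$ can be made summable.

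The construction goes as follows. Set $x_0 = 0$, and choose $x_1 \geq 1$ with $\Pr(X \geq x_1) \leq 2^{-1}$. Inductively, given $x_0 < \dots < x_{n-1}$ with $n \geq 2$, pick $x_n$ large enough so that both
\begin{equation*}
\Pr(X \geq x_n) \leq 2^{-n} \eqand x_n - x_{n-1} \geq x_{n-1} - x_{n-2} \eqfs
\end{equation*}
This is always possible because $\Pr(X \geq x) \to 0$. The resulting sequence satisfies $x_n - x_{n-1} \geq 1$ for all $n \geq 1$, so $x_n \to \infty$. Define $h \colon [0,\infty) \to [0,\infty)$ to be the unique continuous function that is affine on each interval $[x_{n-1}, x_n]$ with $h(x_n) = n$ for all $n \geq 0$.

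I would then check the four required properties. Continuity is automatic. The slope on $[x_{n-1}, x_n]$ equals $1/(x_n - x_{n-1})$, which is strictly positive (so $h$ is strictly increasing) and non-increasing in $n$ by construction (so $h$ is concave). Since $h(x_n) = n \to \infty$ and $x_n \to \infty$, we get $h(\delta) \to \infty$ as $\delta \to \infty$. For integrability, monotonicity of $h$ gives $h(X) \leq n$ on $\{x_{n-1} \leq X < x_n\}$, hence by Abel summation (Fubini for non-negative sums)
\begin{equation*}
\Ex{h(X)} \leq \sum_{n=1}^\infty n\, \Pr(x_{n-1} \leq X < x_n) = \sum_{k=1}^\infty \Pr(X \geq x_{k-1}) \leq 1 + \sum_{k=1}^\infty 2^{-k} < \infty \eqfs
\end{equation*}

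There is no substantive obstacle; the only subtlety is the joint bookkeeping ensuring the tail bound and the spacing condition can both be enforced at every step, which is handled by the observation that $\Pr(X \geq \cdot) \to 0$ lets us push $x_n$ arbitrarily far to the right. The argument is distribution-free, which is exactly what is needed in the proof of \autoref{cor:median} where the lemma is invoked to start from an arbitrary random variable $\ol Yo$ with no a priori moment assumptions.
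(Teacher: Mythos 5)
Your proof is correct, but it takes a genuinely different and more direct route than the paper. The paper splits into two cases (if $X$ is a.s.\ bounded it takes $h(x)=x$), and in the unbounded case first builds an intermediate non-decreasing function $\tilde h(x) = \int_0^x g(t)/(1-F(t))\,\dl t$ with a carefully chosen step function $g$, verifying $\Ex{\tilde h(X)} = \int_0^\infty g(t)\,\dl t < \infty$ by a change of variables; only then does it produce the concave $h$ as the piecewise linear interpolation of knots $(x_n, n)$ with non-decreasing gaps, inheriting integrability from $h \leq \tilde h + 1$. You skip the intermediate construction entirely: you choose the knots directly from the tail of $X$ (forcing $\Pr(X \geq x_n) \leq 2^{-n}$ together with the gap-monotonicity constraint, both achievable since $\Pr(X\geq x)\to 0$), and you get $\Ex{h(X)}<\infty$ from the layer/tail-sum bound $\sum_n n\,\Pr(x_{n-1}\leq X < x_n) = \sum_k \Pr(X\geq x_{k-1}) \leq 2$. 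The concavification step (linear interpolation through $(x_n,n)$ with non-increasing slopes) is the same in both arguments, but your choice of knots makes the integrability check elementary, avoids dividing by $1-F$ (and hence the separate treatment of bounded $X$), and keeps the argument distribution-free, exactly as needed for its use in \autoref{cor:median}. The one delicate point, that the tail condition and the spacing condition can be enforced simultaneously at each step, is explicitly handled. No gaps.
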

\begin{proof}
	If there is $K > 0$ such that $\Prof{X < K} = 1$ take $h(x) = x$. Now, assume that $X$ is not almost surely bounded.
	We first construct a non-decreasing function $\tilde h \colon [0,\infty) \to [0,\infty)$ such that $\tilde h (x) \xrightarrow{x\to\infty}\infty$ with $\Ex{\tilde h(X)} < \infty$.
	Then we construct a function $h$ from $\tilde h$ with all desired properties.
	
	Let $F$ be the distribution function of $X$, $F(x) = \Prof{X \leq x}$.
	Let $z_1 = 0$ and $z_{n+1} = \inf \cbOf{x\geq z_n +1 \,\big\vert\, 1-F(x) \leq \frac1n}$. As $F(x)\xrightarrow{x\to\infty} 1$, $z_n < \infty$. Furthermore, $z_{n+1}-z_n \geq 1$. Moreover, as $X$ is not almost surely bounded, $1-F(x) > 0$ for all $x\geq 0$. Set 
	\begin{align*}
		g(x) &= \sum_{n=1}^\infty (z_{n+1}-z_n)^{-1}n^{-2}\ind_{[z_n, z_{n+1})} (x)
		\eqcm
		\\
		\tilde h(x) &= \int_0^x \frac{g(t)}{1-F(t)} \dl t
		\eqfs
	\end{align*}
	Then
	\begin{align*}
		\lim_{x\to\infty} \tilde h(x) 
		= 
		\int_0^\infty \frac{g(t)}{1-F(t)} \dl t
		\geq
		\sum_{n=1}^\infty n^{-1} 
		= 
		\infty
		\eqfs
	\end{align*}
	Moreover, $\tilde h(x)$ is strictly increasing, as $g(t)\geq 0$ and $1-F(t) \geq 0$.
	The function $\tilde h$ is continuously differentiable everywhere except at point $z_n$, $n\in\N$. Thus,
	\begin{align*}
		\Ex{\tilde h(X)}
		&=
		\int_0^\infty \PrOf{\tilde h(X) > t} \dl t
		\\&=
		\int_0^\infty \PrOf{X > \tilde h^{-1}(t)} \dl t
		\\&=
		\int_0^\infty \tilde h\pr(t) \PrOf{X > t} \dl t
		\\&=
		\int_0^\infty g(t) \dl t
		\\&=
		\sum_{n=1}^\infty n^{-2}
		<
		\infty
		\eqfs
	\end{align*}
	Let $a_0 = 1$, $x_0=0$, $x_{n+1} = \inf\cbOf{x \geq x_n + a_n^{-1} \,\big\vert\, \tilde h(x) \geq n+1}$ and $a_{n+1} = (x_{n+1} - x_n)^{-1}$. Let $h \colon [0,\infty) \to [0,\infty)$ be the linear interpolation of $(x_n, n)_{n\in\N_0}$. 
	As $\tilde h(x) \xrightarrow{x\to\infty}\infty$, all $x_n$ are finite. Hence, $h(x) \xrightarrow{x\to\infty}\infty$. 
	Because of $a_n > 0$, $h$ is strictly increasing.
	Furthermore, $a_{n+1} \leq a_n$ as $x_{n+1} \geq x_n + a_n^{-1}$. As $h$ is continuous and $a_n$ is the derivative of $h$ in the interval $(x_n, x_{n+1})$, $h$ is concave. 
	Lastly, $h(x) \leq \tilde h(x) + 1$. Thus, $\Ex{h(X)} < \infty$.
\end{proof}
\begin{lemma}[Fréchet means in product spaces]\label{lmm:products}
Let $K\in\N$. Let $(\mc Q_1, d_1), \dots, (\mc Q_K, d_K)$ be metric spaces. 
Let $\alpha\geq1$.
Set $\mc Q := \bigtimes_{k=1}^K \mc  Q_k$ and $d \colon \mc Q \times \mc Q \to [0,\infty)$, $d(q,p) := (\sum_{k=1}^K d_k(q_k, p_k)^\alpha)^\frac1\alpha$. Then $(\mc Q, d)$ is a metric space. Let $Y = (Y^1, \dots, Y^K)$ be a tuple of random variables such that $Y^k$ has values in $\mc Q_k$ and $\Ex{d(Y,o)^{\alpha-1}} < \infty$ for an element $o\in\mc Q$. Let $M$ be the $\alpha$-Fréchet mean set of $Y$ in $(\mc Q, d)$, and $M^k$ be the $\alpha$-Fréchet mean set of $Y^k$ in $(\mc Q_k, d_k)$. Then $M$ is the Cartesian product of the sets $M^k$, i.e.,
\begin{equation*}
	M = \bigtimes_{k=1}^K M^k
	\eqfs
\end{equation*}
\end{lemma}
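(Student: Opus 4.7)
The plan is to reduce the minimization to a coordinatewise minimization by exploiting that the $\alpha$-th power of the $\ell^\alpha$-combination of the $d_k$'s splits as a sum.

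First I would verify that $(\mc Q, d)$ is indeed a metric space: symmetry and positive definiteness are immediate from the corresponding properties of each $d_k$, and the triangle inequality follows from Minkowski's inequality applied to the vectors $(d_k(q_k, r_k))_{k=1}^K$ and $(d_k(r_k, p_k))_{k=1}^K$ in $\R^K$ with its $\ell^\alpha$-norm -- this is the one place where the assumption $\alpha \geq 1$ is used.

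Next, write $o = (o_1, \dots, o_K)$ and, for each $k$, let $F_k(q_k) := \Ex{d_k(Y^k, q_k)^\alpha - d_k(Y^k, o_k)^\alpha}$. Note that $d_k(Y^k, o_k) \leq d(Y, o)$, so the moment assumption yields $\Ex{d_k(Y^k, o_k)^{\alpha-1}} < \infty$ for every $k$. Combined with the mean value estimate $|a^\alpha - b^\alpha| \leq \alpha |a-b| \max(a,b)^{\alpha-1}$ applied to $a = d_k(Y^k, q_k)$ and $b = d_k(Y^k, o_k)$ and a triangle inequality, this shows $|d_k(Y^k, q_k)^\alpha - d_k(Y^k, o_k)^\alpha|$ is integrable for every $q_k \in \mc Q_k$, so $F_k(q_k)$ is a finite real number. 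By the definition of $d$, pointwise
\begin{equation*}
d(Y,q)^\alpha - d(Y,o)^\alpha = \sum_{k=1}^K \br{d_k(Y^k, q_k)^\alpha - d_k(Y^k, o_k)^\alpha}\eqcm
\end{equation*}
and the summands are individually integrable, so linearity of expectation yields
\begin{equation*}
F(q) := \Ex{d(Y,q)^\alpha - d(Y,o)^\alpha} = \sum_{k=1}^K F_k(q_k)\eqfs
\end{equation*}

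Since each summand depends only on its own coordinate and the coordinates can be varied independently,
\begin{equation*}
\inf_{q \in \mc Q} F(q) = \sum_{k=1}^K \inf_{q_k \in \mc Q_k} F_k(q_k)\eqcm
\end{equation*}
and a tuple $q = (q_1, \dots, q_K)$ attains this infimum if and only if each $q_k$ attains $\inf F_k$. That is precisely the claim $M = \bigtimes_{k=1}^K M^k$.

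The only potentially delicate point is the integrability step; beyond that the result is a direct consequence of separability of the objective. In particular no compactness, completeness, or Heine--Borel assumption is needed, and $M^k = \emptyset$ for some $k$ translates consistently into $M = \emptyset$.
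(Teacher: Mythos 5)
Your argument is correct: the coordinatewise decomposition $d(Y,q)^\alpha-d(Y,o)^\alpha=\sum_k\bigl(d_k(Y^k,q_k)^\alpha-d_k(Y^k,o_k)^\alpha\bigr)$, the integrability check via $d_k(Y^k,o_k)\leq d(Y,o)$ and the mean value bound, and the observation that a separable objective over a product space is minimized exactly coordinatewise (with the empty-set cases handled consistently) together give the claim. The paper itself omits the proof, remarking only that the lemma follows by straightforward calculation, and your sketch is precisely that intended calculation.
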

\autoref{lmm:products} can be proven by straight forward calculations.
\begin{lemma}\label{lmm:precompact}
Let $(\mc Q, d)$ be a complete metric space and $A\subset \mc Q$. Assume that all closed subsets $B\subset A$ are compact. Then $\overline A$ is compact.
\end{lemma}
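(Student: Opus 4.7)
The plan is to prove that $\overline{A}$ is compact by showing that $A$ is totally bounded and then invoking completeness of $\mc Q$. Recall that a subset of a complete metric space is precompact (i.e.\ has compact closure) if and only if it is totally bounded, so it suffices to establish total boundedness of $A$.

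I would proceed by contradiction: suppose $A$ is not totally bounded. Then there exists $\epsilon > 0$ such that $A$ cannot be covered by finitely many open balls of radius $\epsilon$. Using this, I would inductively construct a sequence $(a_n)_{n\in\N} \subset A$ with $d(a_n, a_m) \geq \epsilon$ for all $n \neq m$: having chosen $a_1,\dots,a_n$, the balls $\ball_\epsilon(a_1),\dots,\ball_\epsilon(a_n)$ do not cover $A$, so there is $a_{n+1}\in A$ outside their union, yielding $d(a_{n+1},a_i)\geq \epsilon$ for $i\leq n$.

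Next, I would consider the set $B := \{a_n : n\in\N\}\subset A$. Since the points of $B$ are pairwise at distance at least $\epsilon$, no subsequence of $(a_n)$ is Cauchy, hence no subsequence converges in the complete space $\mc Q$. Consequently, $B$ has no accumulation points in $\mc Q$, so $B$ is closed in $\mc Q$. Since $B\subset A$, the hypothesis of the lemma forces $B$ to be compact. But an $\epsilon$-separated infinite set cannot be compact (the open cover $\{\ball_{\epsilon/2}(a_n)\}_{n\in\N}$ has no finite subcover, as each ball contains at most one $a_n$). This contradiction shows that $A$ must be totally bounded.

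The main (and essentially only) subtlety is verifying that the $\epsilon$-separated set $B$ is actually closed in $\mc Q$ so that the hypothesis applies; this is where completeness of $\mc Q$ enters, ensuring no convergent subsequences sneak in from outside $B$. Once $A$ is known to be totally bounded, the closure $\overline{A}$ is totally bounded as well, and being a closed subset of the complete space $\mc Q$, it is complete; a complete and totally bounded metric space is compact, finishing the proof.
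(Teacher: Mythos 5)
Your proof is correct, but it takes a genuinely different route from the paper. The paper argues sequential compactness directly: given a sequence in $\overline{A}$, it approximates its terms by points of the closed (hence, by hypothesis, compact) inner sets $B_n = \mc Q \setminus \ball_{2^{-n}}(\mc Q\setminus A)$, extracts convergent subsequences via a diagonal argument, and uses completeness to produce a limit in $\overline{A}$. You instead reduce everything to total boundedness: if $A$ were not totally bounded, an $\epsilon$-separated sequence in $A$ would form an infinite closed subset of $A$, which the hypothesis forces to be compact, a contradiction; then totally bounded plus completeness of $\mc Q$ gives compactness of $\overline{A}$ by the standard characterization. Your argument is shorter and relies only on textbook facts, avoiding the paper's explicit approximation and diagonal construction; the paper's proof is self-contained in that it never invokes the ``complete and totally bounded $\Leftrightarrow$ compact'' theorem. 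One small inaccuracy in your commentary: completeness is not what makes the $\epsilon$-separated set $B$ closed --- such a set has no accumulation points in any metric space, since a ball of radius $\epsilon/2$ contains at most one of its points (a non-Cauchy subsequence cannot converge regardless of completeness). Completeness is genuinely used only in the final step, where $\overline{A}$, being closed in $\mc Q$ and totally bounded, is complete and totally bounded, hence compact. This does not affect the validity of the proof.
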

\begin{proof}
	Let $(a_k)_{k\in\N}\subset \overline{A}$. We show that $(a_k)_{k\in\N}$ has a converging subsequence with limit $a \in \overline{A}$, which implies compactness of $\overline{A}$.
	
	Let $\delta_n = 2^{-n}$. Define $B_n := \mc Q \setminus \ball_{\delta_n}(\mc Q \setminus A)$. The sets $B_n$ are closed and subsets of $A$. Thus, they are compact. Let $b_k^n\in\argmin_{b \in B_n} d(b, a_k)$. Such an element exists as $B_n$ is compact. Furthermore, $d(b_k^n, a_k) \leq \delta_n$. Define the subindex sequences $(k(n,\ell))_{\ell\in\N} \subset \N$ such that $k(0, \ell) = \ell$ and $(b^n_{k(n, \ell)})_{\ell\in\N}$ is a converging subsequence of $(b^n_{k(n-1, \ell)})_{\ell\in\N}$ with limit $b^n_{k(n, \ell)} \xrightarrow{\ell\to\infty}b^n_\infty$ and $d(b^n_{k(n, \ell)}, b^n_\infty) \leq \delta_\ell$. By the triangle inequality $d(b^{n_1}_k, b^{n_2}_k) \leq d(b^{n_1}_k, a_k) + d(b^{n_2}_k, a_k) \leq \delta_{n_1}+\delta_{n_2}$. Thus, $d(b^{n_1}_\infty, b^{n_2}_\infty) \leq \delta_{n_1}+\delta_{n_2}$, which makes $(b^{n}_\infty)_{n\in\N}$ a Cauchy-sequence. Define $a$ as its limit, i.e., $b^{n}_\infty \xrightarrow{n\to\infty} a$. As $b^{n}_\infty\in B_n \subset\overline{A}$, also $a\in\overline{A}$. Finally, the triangle inequality yields
	\begin{equation*}
		d(a_{k(n, n)}, a) \leq d(a_{k(n, n)}, b^n_{k(n, n)}) + d(b^n_{k(n, n)}, b^{n}_\infty) + d(b^{n}_\infty, a) \xrightarrow{n\to\infty}0\eqcm
	\end{equation*}
	i.e., $(a_{k(n, n)})_{n\in\N}$ is subsequence of $(a_k)_{k\in\N}$ which converges in $\overline{A}$.
\end{proof}
\end{appendix}
\bibliographystyle{apalike}
\bibliography{literature}
\end{document}